\newcommand{\lmcsonly}[1]{#1}
\newcommand{\arxivonly}[1]{}
\newcommand{\lmcsorarxiv}[2]{#1}
\definecolor{darkgreen}{rgb}{0,0.45,0}
\definecolor{darkred}{rgb}{0.75,0,0}
\definecolor{darkblue}{rgb}{0,0,0.6}
  \patchcmd{\paragraph}{\normalfont}{{\normalfont \bfseries}}{}{}
  \theoremstyle{plain}
  \newenvironment{theorem}{\begin{thm}}{\end{thm}\ignorespacesafterend}
  \newenvironment{lemma}{\begin{lem}}{\end{lem}\ignorespacesafterend}
  \newenvironment{corollary}{\begin{cor}}{\end{cor}\ignorespacesafterend}
  \newenvironment{definition}{\begin{defi}}{\end{defi}\ignorespacesafterend}
  \newenvironment{proposition}{\begin{prop}}{\end{prop}\ignorespacesafterend}
  \newtheorem{problem}[thm]{Problem}
  \newenvironment{example}{\begin{exa}}{\end{exa}\ignorespacesafterend}
  \theoremstyle{definition}
  \newtheorem{constrInternal}[thm]{Construction}
  \theoremstyle{remark}
  \newenvironment{remark}{\begin{rem}}{\end{rem}\ignorespacesafterend}
  \theoremstyle{plain}
  \newtheorem{theorem}{Theorem}
  \newtheorem{lemma}[theorem]{Lemma}
  \newtheorem{problem}[theorem]{Problem}
  \newtheorem{proposition}[theorem]{Proposition}
  \theoremstyle{definition}
  \newtheorem{definition}[theorem]{Definition}
  \newtheorem{constrInternal}[theorem]{Construction}
  \newtheorem{example}[theorem]{Example}
  \theoremstyle{remark}
  \newtheorem{remark}{Remark}
  \newtheorem*{remark*}{Remark}
\theoremstyle{thmC}
\newtheorem{corC}[thm]{Corollary}
\newenvironment{construction}[2][]
  {\begin{constrInternal}[{for Problem~\ref{#2}\ifthenelse{\isempty{#1}}{}{; #1}}]}
  {\qed\end{constrInternal}}
  \newcommand{\keywords}[1]{\newcommand{\thekeywords}{#1}}
\newcommand{\grants}[1]{\newcommand{\thegrants}{#1}}
\title{Displayed categories}
  \author[B.~Ahrens]{Benedikt Ahrens}
  \address{School of Computer Science, University of Birmingham, United Kingdom}
  \email{b.ahrens@cs.bham.ac.uk}
  \author[P.~LeF.~Lumsdaine]{Peter LeFanu Lumsdaine}
  \address{Department of Mathematics, Stockholm University, Sweden}
  \email{p.l.lumsdaine@math.su.se}
  \author{Benedikt Ahrens \\ \normalsize \nolinkurl{b.ahrens@cs.bham.ac.uk}
    \and Peter LeFanu Lumsdaine \\ \normalsize \nolinkurl{p.l.lumsdaine@math.su.se}}
\keywords{Category theory, Dependent type theory, Computer proof assistants, Coq, Univalent mathematics}
\newcommand{\C}{{\mathcal{C}}}
\newcommand{\D}{\mathcal{D}}
\newcommand{\E}{\mathcal{E}}
\newcommand{\J}{\mathcal{J}}
\newcommand{\T}{{\mathcal{T}}}
\newcommand{\f}{\bar{f}}
\newcommand{\g}{\bar{g}}
\newcommand{\h}{\bar{h}}
\newcommand{\constfont}[1]{\ensuremath{ \mathsf{#1} }}
\newcommand{\catname}[1]{\ensuremath{\constfont{#1}}\xspace}
\newcommand{\set}{\catname{Set}}
\newcommand{\type}{\catname{Type}}
\newcommand{\Grp}{\catname{Grp}}
\newcommand{\dGrp}{\Grp}
\newcommand{\Top}{\catname{Top}}
\newcommand{\dTop}{\Top}
\newcommand{\EndAlg}[2][]{\ensuremath{\text{\({#2}\)-\(\catname{EndAlg}\)}}}
\newcommand{\MonadAlg}[2][]{\ensuremath{\text{\({#2}\)-\(\catname{MonAlg}\)}}}
\newcommand{\Alg}[2][]{\ensuremath{\text{\({#2}\)-\(\catname{Alg}\)}}}
\newcommand{\dAlg}[2][]{\Alg[1]{#2}}
\newcommand{\psh}[1]{\widehat{#1}}
\newcommand{\dconst}[2][]{\mathop{\constfont{const}_{#1}}{#2}}
\newcommand{\Ty}[1][]{\constfont{Ty}\ifthenelse{\isempty{#1}}{}{(#1)}}
\newcommand{\compext}[2]{\ensuremath{{#1}.{#2}}}
\newcommand{\pb}{\ar@{}[dr]|<<{\text{\pigpenfont A}}}
\renewcommand{\hom}{\constfont{hom}}
\newcommand{\mor}[3][]{\hom^{#1}(#2,#3)}
\newcommand{\dmor}[4][]{\ensuremath{\hom^{#1}_{#3}({#2},{#4})}}
\newcommand{\dto}[3]{\ensuremath{{#1} \to_{#2} {#3}}}
\newcommand{\dob}[2]{\ensuremath{{#1}_{#2}}}
\newcommand{\compose}[2]{\ensuremath{{#1}\cdot{#2}}}
\newcommand{\total}[2][]{\ensuremath{\textstyle \int_{#1}{#2}}}
\newcommand{\forget}[1][]{\pi_1^{#1}}
\newcommand{\fiber}[2]{\ensuremath{{#1}_{#2}}}
\newcommand{\arr}[1]{#1^{\rightarrow}}
\newcommand{\darr}[1]{\arr{#1}}
\newcommand{\dom}{\constfont{dom}}
\newcommand{\cod}{\constfont{cod}}
\newcommand{\dreind}[2]{{#1}^*{#2}}
\newcommand{\slice}[2][-]{{#2}/{#1}}
\newcommand{\coslice}[2][-]{{#1}\backslash{#2}}
\newcommand{\Sigdisp}[2]{\sum_{#1} #2}
\newcommand{\idpath}[1][]{1_{#1}}
\newcommand{\idtoiso}{\ensuremath{\mathsf{idtoiso}}\xspace}
\newcommand{\iso}{\cong}
\newcommand{\Iso}[3][]{\constfont{iso}_{#1}({#2},{#3})} 
\newcommand{\diso}[3]{\ensuremath{ {#1} \iso_{#2} {#3} }}
\newcommand{\dIso}[4][]{\constfont{iso}^{#1}_{#3}({#2},{#4})}
\newcommand{\inv}[1]{{#1}^{-1}}
\newcommand{\defeq}{\coloneqq}
\newcommand{\depeq}[1][*]{=_{#1}}
\newcommand{\UniMath}{\href{https://github.com/UniMath/UniMath}{\nolinkurl{UniMath}}\xspace}
\newcommand{\nolinkcoqident}[1]{\nolinkurl{#1}} 
\newcommand{\coqident}{\begingroup\@makeother\#\@coqident}
\newcommand{\@coqident}[3][]{%
  \ifthenelse{\isempty{#2}}%
  {\nolinkcoqident{#3}}%
  {\ifthenelse{\isempty{#1}}%
  {\href{\coqdocurl{#2}{#3}}{\nolinkcoqident{#3}}}%
  {\href{\coqdocurl{#2}{#3}}{\nolinkcoqident{#1}}}}%
\endgroup}
\newcommand{\gittag}{\begingroup\@makeother\#\@gittag}
\newcommand{\@gittag}[1]{\href{\gittagurl{#1}}{\nolinkurl{#1}}\endgroup}
\newcounter{saveenumi}
\newcommand{\saveitem}{\setcounter{saveenumi}{\value{enumi}}}
\newcommand{\restoreitem}{\setcounter{enumi}{\value{saveenumi}}}
\newcommand{\plan}[1]{}
\newcommand{\BA}[1]{}
\newcommand{\PLL}[1]{}
\renewcommand{\plan}[1]{\textcolor{blue}{#1}}
\renewcommand{\BA}[1]{\textcolor{orange}{BA: #1}}
\renewcommand{\PLL}[1]{\textcolor{purple}{PLL: #1}}
\begin{document}

\begin{abstract}
We introduce and develop the notion of \emph{displayed categories}.

A displayed category over a category \(\C\) is equivalent to ‘a category \(\D\) and functor \(F : \D \to \C\)’, but instead of having a single collection of ‘objects of \(\D\)’ with a map to the objects of \(\C\), the objects are given as a family indexed by objects of \(\C\), and similarly for the morphisms.
This encapsulates a common way of building categories in practice, by starting with an existing category and adding extra data/properties to the objects and morphisms.

The interest of this seemingly trivial reformulation is that various properties of functors are more naturally defined as properties of the corresponding displayed categories.
Grothendieck fibrations, for example, when defined as certain functors, use equality on objects in their definition.
When defined instead as certain displayed categories, no reference to equality on objects is required.
Moreover, almost all examples of fibrations in nature are, in fact, categories whose standard construction can be seen as going via displayed categories.

We therefore propose displayed categories as a basis for the development of fibrations in the type-theoretic setting, and similarly for various other notions whose classical definitions involve equality on objects.

Besides giving a conceptual clarification of such issues, displayed categories also provide a powerful tool in computer formalisation, unifying and abstracting common constructions and proof techniques of category theory, and enabling modular reasoning about categories of multi-component structures.
As such, most of the material of this article has been formalised in Coq over the \UniMath library, with the aim of providing a practical library for use in further developments.

\arxivonly{
  \paragraph{Keywords.} \thekeywords.
}
\end{abstract}

\maketitle

\clearpage

\opt{draft}{\tableofcontents}

\section{Introduction}

It is often said that reference to equality of objects of categories is in general both undesirable and unnecessary.

There are some topics, however, whose development does appear to require it.
One example often given is the definition of \emph{(Grothendieck) fibrations} (and their relatives): functors \(p : \D \to \C\) equipped with a lifting property providing (among other things) an object \(d\) of \(\D\) such that \(pd\) is equal to a previously given object \(c\) of \(\C\).
A similar example is the property of \emph{creating limits}; see \cite[Remark~5.3.7]{leinster} for an explicit discussion of this example.%
\footnote{Both of these definitions have analogues in which the equality is weakened to isomorphism; but the strict versions have nonetheless remained in more general currency.}

In examples of fibrations (or creation of limits), however, one virtually never has cause to speak explicitly of equality of objects; and equally in their basic general theory.

How is this avoidance achieved?
In the general development, equality occurs only within the notion of ‘objects of \(\D\) over \(c\)’, for objects \(c\) of \(\C\).
And in examples, there is almost always an obvious alternative notion of ‘object \(\D\) over \(c\)’, trivially equivalent to ‘objects of \(\D\) whose projection is equal to \(c\)’, but expressible without mentioning equality of objects.

Specifically, objects of \(\D\) typically consist of objects of \(\C\) equipped with extra data, structure, or properties; ‘an object of \(\D\) over \(c\)’ is then understood to mean ‘a choice of the extra data for \(c\)’.
For instance, in showing that the forgetful functor \(\Top \to \set\) creates limits, one doesn’t construct a space and then note that its underlying set is equal to the desired one; one simply constructs a suitable topology on that set.

The notion of \emph{displayed categories} makes this explicit.
A displayed category over \(\C\) consists of a family of types \(\D_c\) (of ‘objects over \(c\)’), indexed by objects \(c\) of \(\C\), and similarly sets of morphisms indexed by morphisms of \(\C\), along with suitable composition and identity operations to ensure that the total collections of objects and morphisms form a category (with a projection functor to \(\C\)).
This is entirely equivalent to the data of a category with a functor to \(\C\), just as ‘a family of sets indexed by \(X\)’ is equivalent to ‘a set with a function to \(X\)’.

If fibrationhood (or creating limits, etc.)\ is now defined not as a property of a functor but instead as a property of a displayed category, no mention of equality of objects is required.
Equality of objects is used only for turning an arbitrary functor into a displayed category; but this is rarely needed in practice, since most natural examples of fibrations, creation of limits, and so on already arise from displayed categories.
For instance, the standard definition of the category \(\Top\) can be read as the total category of a displayed category over \(\set\), whose objects over a set \(X\) are topologies on \(X\).

We therefore propose that displayed categories should be taken as a basis for the development of fibrations, creation of limits, and similar notions, in particular in the type-theoretic setting, where dealing with equality on objects is more practically problematic than in classical foundations.

We do not believe we are introducing something mathematically novel here; we are simply making explicit an aspect of how mathematicians already deal with certain kinds of examples in practice.
The payoffs, however, are twofold.

Firstly, since this concept has been previously un-articulated, it has not been consistently appreciated that it resolves the ‘problematic’ issue of fibrations (and various other notions) apparently requiring use of equality on objects.
Besides providing conceptual clarification, this should help in future work with disentangling which constructions genuinely \emph{do} require use of equality on objects, and hence may require extra work or assumptions to develop in type-theoretic settings.

Secondly, by making this common informal technique precise, we make it available for use in computer formalisation, where a difference between the formal definitions given and the approach used in practice cannot be so blithely elided as it can for human mathematicians.
Aside from issues of equality on objects, many common proof-techniques for reasoning about categories of multi-component structures can be expressed formally in terms of displayed categories, giving an essential toolbox for constructing and investigating such categories in formalisations.

To that end, most constructions and results of the present paper have been formalised in the proof assistant Coq, over the \UniMath library, with the goal of providing a practical library for re-use in further developments. 

While that development is in univalent type theory, for the present article we work in an ‘agnostic’ logical setting: all results may be understood either in type theory with univalence, or in a classical set-theoretic foundation.

\subsection{Outline}

We begin, in \textsection \ref{sec:background}, by laying out precisely the agnostic type-theoretic foundation in which we work, and recalling the basic background of category theory in this setting.

In \textsection \ref{sec:core}, we then set up the core definitions and constructions of displayed categories, along with various examples which will be used as running illustrations through the following sections.

Following this, in \textsection \ref{sec:limits}, we consider \emph{creation of limits}, a first simple example of a classical property of functors which can be stated and developed more cleanly as a property of displayed categories.

In \textsection \ref{sec:fibrations}, we move to the central such example: fibrations, along with their cousins isofibrations, discrete fibrations, and so on.
We set out the displayed-category definitions of these, and set out some of the basic results and constructions over this definition.

This provides a basis for the theory and application of fibrations in the type-theoretic setting.
In \textsection \ref{sec:comp-cats}, we use this to define \emph{comprehension categories}---a categorical axiomatisation of type dependency---bringing together several of the tools set up in earlier sections.

Finally, in \textsection \ref{sec:univalent}, we consider \emph{univalence} of displayed categories.
The main result there is that the total category of a univalent displayed category (suitably defined) over a univalent base category is univalent.
This generalises the \emph{structure identity principle} of \cite[\textsection 9.8]{HoTTbook}.

Throughout the article, many proofs would be almost word-for-word the same as standard proofs of the corresponding results about classically-defined fibrations (resp.\ creation of limits, etc), since displayed categories are exactly a formal abstraction of the language already used in such proofs.
We therefore omit these, to avoid repeating well-known material---but we invite the reader to recall the standard proofs, and see how directly they transfer.

Most other proofs are also either omitted or just briefly sketched, if they are either routine, available in detail in the formalisation, or both.

We follow Voevodsky in writing ‘Problem’, rather than ‘Theorem’, ‘Proposition’, etc., to denote proof-relevant results.
 
\subsection{Formalisation}

Most results of the present article have been formalised in Coq, over the \UniMath library of Voevodsky et al.~\cite{unimath}.

The primary goal of the formalisation is to provide a library for use in further work.
We have therefore focused in it on the results and constructions we expect to be useful in such work.
In particular, we have not formalised the comparisons with classical definitions: these are not needed for the development of fibrations etc.\ based on displayed categories, but rather form a justification that this approach is ‘correct’ from a classical point of view.

The formalisation is available as part of the \UniMath library, at \url{https://github.com/UniMath/UniMath}, 
in the subdirectory \href{https://github.com/UniMath/UniMath/tree/master/UniMath/CategoryTheory/DisplayedCats}{\nolinkurl{UniMath/CategoryTheory/DisplayedCats}}.
Instructions for use can be found in the repository’s \href{https://github.com/UniMath/UniMath/blob/master/README.md}{\nolinkurl{README.md}} file.

As a base for further development, readers are recommended to use the most up-to-date version of \UniMath.
However, organisation and naming of material there may change in future, so for permanent reference, the specific version described in this article is commit \href{https://github.com/UniMath/UniMath/tree/4dd5c1779d3b8db4db6392cfb4471985a1844d22}{4dd5c17} (8 December 2018), with browsable online documentation at \url{https://unimath.github.io/doc/UniMath/4dd5c17/toc.html}.

Definitions, constructions, and results included in the formalisation are labelled below with their corresponding identifiers, as e.g.\ \coqident{Core}{disp_cat},
and linked to their code in the reference version.

The material of the present paper constitutes about 5,000 lines of code.

\subsection{Revision notes}

This article is an expanded version of the conference paper \cite{ahrens_et_al:LIPIcs:2017:7722}, presented at Formal Structures for Computation and Deduction (FSCD) 2017.
Changes include the addition of Section~\ref{subsec:amnestic} on amnestic functors, and various minor local revisions.

\section{Background} \label{sec:background}

\subsection{Logical setting}

All the material of the present paper may be understood either in the univalent setting, or in classical set-theoretic foundations.

Precisely, our background setting throughout is Martin-Löf’s intensional type theory, with:
\(\Sigma\)-types, with the strong \(\eta\) rule;
identity types;
\(\Pi\)-types, also with \(\eta\), and functional extensionality; 
\(\constfont{0}\), \(\constfont{1}\), \(\constfont{2}\), and \(\constfont{N}\);
propositional truncation; and
two universes closed under all these constructions.

This setting is \emph{agnostic} about equality on types: it assumes neither univalence, nor UIP.
It is therefore expected to be compatible both with the addition of univalence, and with the interpretation of types as classical sets.

Some type-theoretic issues trivialise under the classical reading---for instance, the consideration of transport along equalities, which is unnecessary classically. 
Some topics also become less interesting there, as they admit only degenerate examples: in particular, the material on univalent categories.
The reader interested only in the classical setting may therefore ignore these aspects.

\subsection{Type-theoretic background}

We mostly follow the terminology standardised in the HoTT book \cite{HoTTbook}. 
A brief, but sufficient, overview is given in \cite{rezk_completion}, among other places.

We depart from it (and type-theoretic tradition in general) in writing just \emph{existence} for what is called \emph{mere existence} in \cite{HoTTbook}, since this is what corresponds (under the interpretation of types as sets) to the standard mathematical usage of \emph{existence}.

We will make frequent use of \emph{dependent paths/equalities} \cite[\textsection 6.2]{HoTTbook}
Specifically, in a type family \(B_x\) indexed by \(x:A\), we will write dependent equalities as e.g.\ \(p : y_0 \depeq[e] y_1\), where \(e : x_0 =_A x_1\) and \(y_i : B_{x_i}\).
We omit explicit mention of the type family \(B\), since it will always be clear from context.
The base \(A\) will often moreover be a set, in which case \(y_0 \depeq[e] y_1\) does not depend on the base path \(e\), so we suppress this and write just \(y_0 \depeq y_1\).

We will mostly ignore size issues; we would really like to think of everything as being universe-polymorphic.
For concreteness, however, \(\type\) may be understood always as the smaller of our two assumed universes, with types in this universe referred to as \emph{small}, and similarly \(\set\) as meaning the type or category of small sets, and so on.

\subsection{Categories}

We mostly follow the approach to category theory in the type-theoretic setting established in \cite{rezk_completion}.
We depart however from their terminology, writing \emph{categories} for what \cite{rezk_completion} calls precategories (since it is this that becomes the standard definition under the set interpretation), and writing \emph{univalent categories} for what \cite{rezk_completion} calls categories.

Specifically, in a \emph{category} \(\C\), the hom-sets \(\C(a,b)\) are required to be sets, but the type \(\C_0\) of objects is allowed to be an arbitrary type.
A category \(\C\) is \emph{univalent} if for all \(a, b : \C\), the canonical map \(\idtoiso_{a,b} : (a = b) \to \Iso[\C]{a}{b}\) is an equivalence: informally, if ‘equality of objects is isomorphism in \(\C\)’.

Following the \UniMath library, we write composition in the ‘diagrammatic’ order; that is, the composite of \(f : a \to b\) and \(g : b \to c\) is denoted \(\compose{f}{g} : a \to c\).

\section{Displayed categories} \label{sec:core}

In this section, we set out the basic definitions of displayed categories, displayed functors, and displayed natural transformations, along with key constructions on them, and examples which will act as running illustrations throughout the paper.

\subsection{Definition and examples}

\begin{definition}[\coqident{Core}{disp_cat}]
    Given a category \(\C\), a \emph{displayed category \(\D\) over \(\C\)} consists of
   \begin{enumerate}
    \item for each object \(c : \C\), a type \(\dob{\D}{c}\) of ‘objects over \(c\)’;
    \item for each morphism \(f : a \to b\) of \(\C\), \(x : \dob{\D}{a}\) and \(y : \dob{\D}{b}\), a set of ‘morphisms from \(x\) to \(y\) over \(f\)’, 
                  denoted \(\dmor{x}{f}{y}\) or \(\dto{x}{f}{y}\);
    \item for each \(c : \C\) and \(x : \dob{\D}{c}\), a morphism \(1_x : \dto{x}{1_c}{x}\);
    \item for all morphisms \(f : a \to b\) and \(g : b \to c\) in \(\C\) and objects \(x : \dob{\D}{a}\) and \(y : \dob{\D}{b}\) and \(z : \dob{\D}{c}\),
          a function 
          \[\dmor{x}{f}{y} \times \dmor{y}{g}{z} \to \dmor{x}{\compose{f}{g}}{z} \, , \]
          denoted like ordinary composition by \((\f,\g) \mapsto \compose{\f}{\g} : \dto{x}{\compose{f}{g}}{z}\), where \(\f : \dto{x}{f}{y}\) and \(\g : \dto{y}{g}{z}\),
   \end{enumerate} \saveitem
   such that, for all suitable inputs, we have:
   \begin{enumerate} \restoreitem
	  \item \(\compose{\f}{1_y} \depeq \f,\)
	  \item \(\compose{1_x}{\f} \depeq \f,\)
	  \item \(\compose{\f}{(\compose{\g}{\h})} \depeq \compose{(\compose{\f}{\g})}{\h}.\)
   \end{enumerate}
\end{definition}

Note that the axioms are all \emph{dependent} equalities, over equalities of morphisms in \(\C\): for instance, if \(\f : \dto{x}{f}{y}\), then \(\compose{\f}{1_y} : \dto{x}{\compose{f}{1_b}}{y}\), so the displayed right unit axiom \(\compose{\f}{1_y} \depeq \f\) is over the ordinary right unit axiom \(\compose{f}{1_b} = f\) of \(\C\).
This will be typical in what follows: equations in displayed categories will be modulo analogous equations in \(\C\), which we will usually suppress without further comment.

As promised, any displayed category over \(\C\) induces an ordinary category over \(\C\):

\begin{definition}[\coqident{Core}{total_category}, \coqident{Core}{pr1_category}]
 Let \(\D\) be a displayed category \(\D\) over \(\C\).
 The \emph{total category} of \(\D\), written \(\total{\D}\) (or \(\total[\C]{\D}\), or \(\total[c:\C]{\dob{\D}{c}}\))
 is defined as follows:
   \begin{enumerate}
    \item objects are pairs \((a,x)\) where \(a : \C\) and \(x : \dob{\D}{a}\); in other words,
          the type of objects is
          \[ (\total{\D})_0 \defeq \sum_{ a : \C} \dob{\D}{a}\, ; \]
    \item morphisms \((a,x) \to (b,y)\) are pairs \((f,\f)\) where \(f : a \to b\) and 
          \(\f : \dto{x}{f}{y}\); in other words,
          \[ (\total{\D})\bigl( (a,x)  , (b,y) \bigr) \defeq \sum_{ f : \C(a,b) } \dmor{x}{f}{y}\, ; \]
   \item composition and identities in \(\total{\D}\) are induced straightforwardly from those of \(\C\) and \(\D\), and similarly for the axioms.
   \end{enumerate}
   The evident \emph{forgetful functor} \(\forget[\D] : \total{\D} \to \C\) simply takes the first projection, on both objects and morphisms.
\end{definition}

\begin{example}[\coqident{Examples}{group.disp_grp}]
  The \emph{category of groups} can be defined as the total category of a displayed category \(\dGrp\), over \(\set\):
  \begin{enumerate}
  \item \(\dob{\dGrp}{X}\) is the set of group structures on the set \(X\);
  \item given a function \(f : X \to Y\) and group structures \((\mu,e)\) on \(X\) and \((\mu',e')\) on \(Y\), 
    \(\dmor{(\mu,e)}{f}{(\mu',e')}\) is (the type representing) the proposition ‘\(f\) is a homomorphism with respect to \((\mu,e)\), \((\mu',e')\)’;
  \item the displayed composition ‘operation’ is the fact that the composite of homomorphisms is a homomorphism; similarly for the identity;
  \item the axioms are trivial, since the displayed hom-sets are propositions.
  \end{enumerate}
  The total category of this is exactly the usual category of groups.
\end{example}

\begin{example}[\href{\coqdocbasebaseurl/UniMath.Topology.CategoryTop.html\#disp_top}{\nolinkurl{disp_top}}]
  The category of topological spaces can be defined as the total category of the displayed category \(\dTop\) over \(\set\):
  \begin{enumerate}
   \item \(\dob{\dTop}{X}\) is the set of topologies on the set \(X\);
   \item given a function \( f : X \to Y\) and topologies \(T\) on \(X\) and \(T'\) on \(Y\),
          \(\dmor{T}{f}{T'}\) is the proposition ‘\(f\) is continuous with respect to \(X\) and \(Y\)’.
  \end{enumerate}
\end{example}

\begin{example}[\coqident{Examples}{disp_over_unit}]
  Any category can be viewed as a displayed category over the terminal category.
\end{example}

\begin{example}[\coqident{Constructions}{disp_full_sub}] \label{ex:full-subcat-displayed}
  Let \(P : \C \to \type\) be a (type-valued) predicate on the objects of \(\C\).
  Then there is an associated displayed category, with object family exactly \(P\), and with \(\dmor{y}{f}{y'} \defeq 1\) for all \(f : c \to c'\), \(y:P c\), and \(y' : P c'\).
  The operations and axioms are trivial.
  
  Its total category is the full subcategory of \(\C\) of objects satisfying the predicate \(P\).
\end{example}

Properties of the forgetful functor can often be straightforwardly read off from the displayed category:
\begin{proposition}[\coqident{Core}{full_pr1_category}, \coqident{Core}{faithful_pr1_category}]
  Let \(\D\) be a displayed category over \(\C\).
  If every displayed hom-set \(\dmor{y}{f}{y'}\) of \(\D\) is a proposition (resp.\ inhabited, contractible) then \(\forget : \total{\D} \to \C\) is faithful (full, fully faithful). \qed
\end{proposition}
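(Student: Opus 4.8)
The plan is to observe that, for the forgetful functor, the action on each hom-set is literally a first projection out of a $\Sigma$-type, and then to read off the three conclusions from the standard type-theoretic characterisations of when such a projection is an embedding, a surjection, or an equivalence. Concretely, by the definitions of $\total{\D}$ and of $\forget[\D]$, for objects $(a,x)$ and $(b,y)$ of $\total{\D}$ the action of $\forget[\D]$ on the hom-set from $(a,x)$ to $(b,y)$ is exactly the first projection
\[
  \pi_1 : \sum_{f : \C(a,b)} \dmor{x}{f}{y} \longrightarrow \C(a,b), \qquad (f,\f) \mapsto f.
\]
So the whole statement reduces, uniformly in $(a,x)$ and $(b,y)$, to a fact about this single map.

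I would then recall that a functor is \emph{faithful} (resp.\ \emph{full}, \emph{fully faithful}) precisely when each of its actions on hom-sets is an embedding (resp.\ a surjection, an equivalence); it therefore suffices to show that the $\pi_1$ above is an embedding, a surjection, or an equivalence under the respective hypotheses. This is a direct application of the standard lemma on $\Sigma$-types: the fibre of the first projection $\pi_1 : \sum_{f : A} B_f \to A$ over a point $f$ is (equivalent to) $B_f$, and a map is an embedding, a surjection, or an equivalence exactly when all of its fibres are propositions, (merely) inhabited, or contractible, respectively. Instantiating with $A \defeq \C(a,b)$ and $B_f \defeq \dmor{x}{f}{y}$, the hypothesis that each displayed hom-set $\dmor{x}{f}{y}$ is a proposition (resp.\ inhabited, contractible) says exactly that every fibre of $\pi_1$ has the corresponding property, which yields that $\pi_1$ is an embedding (resp.\ surjection, equivalence), and hence the desired conclusion in each case.

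There is essentially no hard step here: the content is entirely in the first observation, that $\forget[\D]$ acts on hom-sets by a $\Sigma$-projection, after which the three cases collapse to one invocation of a library fact about such projections. The only point requiring a little care is to align the precise type-theoretic definitions of faithful, full, and fully faithful with the fibrewise conditions—in particular that ‘full’ means the hom-map is a surjection, i.e.\ has merely inhabited fibres, so that ‘inhabited’ in the hypothesis may be read either as mere inhabitation or as a chosen element, both of which suffice.
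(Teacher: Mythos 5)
Your proof is correct and is essentially the argument the paper has in mind (the paper omits it as routine, and its formalisation, \coqident{Core}{faithful_pr1_category} etc., proceeds exactly this way): the hom-action of $\forget[\D]$ is the first projection out of $\sum_{f : \C(a,b)} \dmor{x}{f}{y}$, whose fibres are the displayed hom-sets, so the three hypotheses translate directly into the $\pi_1$ being an embedding, a surjection, or an equivalence. Your closing remark about reading ``inhabited'' as mere or chosen inhabitation is also the right level of care; nothing is missing.
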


Besides the total category, a displayed category also possesses \emph{fibre} categories:
\begin{definition}[\coqident{Constructions}{fiber_category}]
  Given a displayed category \(\D\) over \(\C\), and an object \(c : \C\), define the \emph{fibre category} \(\fiber{\D}{c}\) of \(\D\) over \(c\) as the category
  with objects \(\dob{\D}{c}\) and with morphisms \(\mor{x}{y} \defeq \dmor{x}{1_c}{y}\). 
  Composition and identity are induced by that of \(\D\).
\end{definition}
In general these may not be so well behaved as the total category; they will typically be interesting and well-behaved just when \(\D\) is an isofibration (Definition~\ref{def:isofib}).

\begin{remark}
  In choosing notation and terminology for examples of displayed categories, a question arises: should one name displayed categories according to their total category, or according to their fibres?

  This problem arises already with fibrations in the classical setting; so we follow for the most part the usual compromises used there.
  Specifically, when a given total category has a particularly canonical displaying—for example, groups displayed over sets—we will use the same name for the displayed category and its total category, so for example \(G : \Grp\) denotes a group, while \((\mu,e) : \dob{\dGrp}{X}\) is a group structure on \(X\).
  On the other hand, when different displayed categories have equivalent total categories—for instance, the product \(\C \times \C'\) may be displayed over either \(\C\) or \(\C'\)—then we will adopt different notation to distinguish these, usually based on the resulting fibre categories.
\end{remark}

Other examples we will meet below include:
\begin{enumerate}
  \item any product \(\C \times \C'\), displayed over its first factor as \(\dconst[\C]{\C'}\) (Example~\ref{ex:const-displayed});
  \item the arrow category \(\arr{\C}\), in several ways: displayed over \(\C^2\), with fibres hom-sets; and displayed over \(\C\), with fibres either the slices or the coslices of \(\C\) (Example~\ref{ex:arr-displayed});
  \item categories of algebras for endofunctors and monads (Examples~\ref{ex:endofunctor-alg-displayed}, \ref{ex:monad-alg-displayed}).
\end{enumerate}
We postpone their full definitions until we have a few more tools set up.

\begin{remark}
  Equivalent definitions in a similar vein as displayed categories—that is, ‘fibred’ presentations of arbitrary functors into a fixed base category—can be recovered from more sophisticated categorical structures in several ways:
  as lax 2-functors or double functors from the base category into the bicategory or double category of spans,
  or as normal lax 2-functors/double functors into the bicategory/double category of distributors (as observed by Bénabou in \cite[\textsection 7]{benabou:distributors-at-work}),
  or as double profunctors from the base category to the terminal double category.%
  \footnote{Our thanks to Mike Shulman and an anonymous referee for pointing out some of these reformulations.}
\end{remark}

\subsection{Displayed functors and natural transformations}

Another occurrence of equality of objects is in various definitions where diagrams of functors are assumed to commute on the nose.
For instance, comprehension categories involve a fibration \(p : \T \to \C\), and a functor \(\chi : \T \to \arr{\C}\), such that \(\compose{\chi}{\cod} = p\) \cite[Theorem 9.3.4]{jacobs-cat-logic}; 
similar conditions occur in the definition of functorial factorisations, in the theory of weak factorisation systems (among many other places).

It is typically clear that the definitions could also be phrased without equality of objects, at some cost in concision or clarity.
Indeed, they are almost always of the form \(\compose{G}{\forget[\D']} = F\), where \(G\) is a functor into the total category of some displayed category, and \(F\) is a previously-given functor into the base.
They are often furthermore of the more specialised form \(\compose{G}{\forget[\D']} = \compose{\forget[\D]}{F}\).

By axiomatising this situation, as \emph{displayed functors} over functors into the base, such definitions can be stated without equality of objects, with no loss of clarity.

\begin{definition}[\coqident{Core}{disp_functor}] \label{def:functor}
  Let \(F : \C \to \C'\) be a functor, and \(\D\), \(\D'\) displayed categories over \(\C\) and \(\C'\) respectively.
  A \emph{(displayed) functor \(G\) from \(\D\) to \(\D'\) over \(F\)} consists of:
  \begin{enumerate}
   \item maps \(\dob{G}{c} : \dob{\D}{c} \to \dob{\D'}{Fc}\), for each \(c:\C\) (which we usually write just as \(G\), omitting \(c\)); and 
   \item maps \(\dmor{x}{f}{y} \to \dmor{Gx}{Ff}{Gy}\), for each \(f : c \to c'\) in \(\C\);
   \item satisfying the evident dependent analogues of the usual functor laws.
  \end{enumerate}
\end{definition}

  A displayed functor \(G\) over \(F\) straightforwardly induces a \emph{total functor} between total categories, written \(\total{G} : \total{\D} \to \total{\D'}\), such that \(\compose{\total{G}}{\forget[\D']} = \compose{\forget[\D]}{F}\).
  Indeed, displayed functors are precisely equivalent to such functors between total categories.
  We often therefore call the total functor just \(G\).
  
  Similarly, a functor \(G\) over \(F\) induces \emph{fibre functors} \(\fiber{G}{c} : \fiber{\D}{c} \to \fiber{\D'}{Fc}\), for each \(c : \C\).

  A useful special case is when \(F\) is the identity functor of \(\C\), in which case we call \(G\) just a functor over \(\C\); this is precisely equivalent to a functor between the total categories strictly over \(C\) in the usual sense.

\begin{definition}[\coqident{Core}{disp_nat_trans}]\label{def:nat_trans}
 Let \(F, F' : \C \to \C'\) be functors, \(\alpha : F \to F'\) a natural transformation,
 and \(G\) and \(G'\) displayed functors from \(\D\) to \(\D'\) over \(F\) and \(F'\) respectively.
 A \emph{displayed natural transformation \(\beta\) from \(G\) to \(G'\) over \(\alpha\)} consists of
 \begin{enumerate}
  \item for each \(c : \C\) and \(d : \dob{\D}{c}\), a morphism \(\beta(d) : \dmor{G(d)}{\alpha(c)}{G'(d)}\)
  \item such that for any \(f : \C(c,c')\) and \(\f : \dmor{d}{f}{d'}\), \(\compose{G\f}{\beta(d')} \depeq \compose{\beta(d)}{G'\f}.\)
 \end{enumerate}
\end{definition}

Just as ordinary functors and natural transormations form a functor category, their displayed versions form a displayed category over the functor category between the bases:

\begin{definition}[\coqident{Constructions}{disp_functor_cat}]
 Given categories \(\C\) and \(\C'\), and displayed categories \(\D\) and \(\D'\) over \(\C\) and \(\C'\) respectively,
 there is a displayed category \([\D,\D']\) over \([\C,\C']\), defined as follows:
 \begin{enumerate}
  \item objects over \(F : \C \to \C'\) are displayed functors from \(\D\) to \(\D'\) over \(F\);
  \item morphisms over \(\alpha : F \to F'\) from \(G\) to \(G'\) are displayed natural transformations from \(G\) to \(G'\) over \(\alpha\);
  \item composition and identity are given by pointwise composition and identity.
 \end{enumerate}
\end{definition}

Displayed analogues of usual lemmas on the functor category hold; for instance:
\begin{lemma}[\coqident{Constructions}{is_disp_functor_cat_iso_iff_pointwise_iso}]
  A displayed natural transformation is an isomorphism in the displayed functor category if and only if it is
  an isomorphism pointwise. \qed
\end{lemma}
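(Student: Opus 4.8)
The plan is to mirror the classical proof that a natural transformation is invertible in a functor category exactly when it is pointwise invertible, and to carry that argument through the displayed layer while tracking the dependent equalities over the base. First I would fix the data: a displayed natural transformation $\beta$ from $G$ to $G'$ over some $\alpha : F \to F'$. Being an isomorphism in $[\D,\D']$ presupposes that the underlying base morphism $\alpha$ is an isomorphism in $[\C,\C']$, i.e.\ a natural isomorphism; recall this holds precisely when each component $\alpha(c)$ is an isomorphism in $\C'$. With $\alpha$ invertible, saying that $\beta$ is an isomorphism \emph{pointwise} means that for each $c:\C$ and $d:\dob{\D}{c}$ the component $\beta(d)$ is a displayed isomorphism in $\D'$ over $\alpha(c)$.

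The forward direction is immediate. If $\beta$ admits a displayed inverse $\gamma$ over $\inv{\alpha}$, then because composition and identities in $[\D,\D']$ are computed pointwise, the component $\gamma(d)$ is a two-sided displayed inverse to $\beta(d)$ for every $d$; hence $\beta$ is pointwise a displayed isomorphism. For the converse, suppose each $\beta(d)$ has a displayed inverse $\gamma(d) : \dmor{G'(d)}{\inv{\alpha(c)}}{G(d)}$. The candidate inverse to $\beta$ is the family $\gamma$, and there are two obligations: (i) that $\gamma$ assembles into a genuine displayed natural transformation over $\inv{\alpha}$, and (ii) that $\gamma$ is a two-sided inverse to $\beta$. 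Obligation (ii) is once more pointwise, and so holds by construction of the $\gamma(d)$.

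The hard part will be obligation (i), the displayed naturality square for $\gamma$; this is where essentially all the content lies. I would derive it from the displayed naturality of $\beta$ by the usual manoeuvre—pre- and post-composing the naturality square of $\beta$ with the relevant inverse components $\gamma(d)$ and $\gamma(d')$ and cancelling—exactly as in the classical proof that the inverse of a natural isomorphism is again natural. The only genuine extra care beyond the classical case is the bookkeeping of dependent equalities: the naturality square for $\gamma$ lives over the naturality square for $\inv{\alpha}$ in $\C'$, which itself arises from that of $\alpha$ by the standard base-level inversion, so the displayed equation must be transported along the corresponding equation of $\C'$. Since the displayed hom-types are sets, these transports are uniquely determined and introduce no coherence difficulties, and the calculation goes through. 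Assembling (i) and (ii) gives the displayed inverse $\gamma$ to $\beta$, completing the converse.
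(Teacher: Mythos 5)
Your proof is correct and is essentially the paper's own (implicit) argument: the paper deliberately omits the proof of this lemma as a ``displayed analogue of a usual lemma'', meaning precisely the classical pointwise-inverse argument---forward direction by pointwise composition, converse by building the inverse family and deriving its naturality via pre-/post-composition and cancellation---carried through the displayed layer with the dependent equalities tracked, which is exactly what you do. One small wording point: the uniqueness of the transports you invoke comes from the \emph{base} hom-types $\C'(-,-)$ being sets (so parallel base paths coincide), rather than from the displayed hom-types being sets, though both facts are harmlessly in play.
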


We could now go on and define \emph{displayed adjunctions} over adjunctions between the bases,
\emph{displayed equivalences} over equivalences of the base, and so on.
From these, one gets adjunctions and equivalences, respectively, of total categories.
A very useful special case is that of displayed adjunctions and equivalences over the identity in the base,
yielding adjunctions and equivalences of total categories leaving the first components of objects untouched.

These definitions are provided in the formalisation; indeed, the original motivation of the present work and formalisation was to have these available, in order to construct an equivalence of univalent categories between CwF-structures and split type-category structures on a fixed base category (cf.\ the equivalence of types of \cite[Construction~3.19]{lmcs:4814}).
However, an account of this is beyond the scope of the present paper.

One may also naturally ask what structure the total collections of displayed categories, functors, and natural transformations form.
We expect that they should form a bicategory when the base category is held fixed, and more generally a displayed bicategory over the bicategory of categories; but this again is beyond the scope of the present work.
 
\subsection{Constructions on displayed categories}

To efficiently construct our remaining key examples, we set up some basic general constructions on displayed categories.

\begin{definition}[\coqident{Core}{reindex_disp_cat}]
  Let \(\D\) be a displayed cat over \(\C\), and \(F : \C' \to \C\) a functor.
  Then \(\dreind{F}{\D}\), the \emph{pullback of \(\D\) along \(F\)}, is the displayed category over \(\C'\) defined by
  \begin{enumerate}
  \item \(\dob{(\dreind{F}{\D})}{c} \defeq \dob{\D}{Fc}\)
  \item \(\dmor[\dreind{F}{\D}]{d}{f}{d'} \defeq \dmor[\D]{d}{Ff}{d'}\)
  \end{enumerate}
  with the evident composition and identities.
  There is an evident displayed functor \(\dreind{F}{\D} \to \D\) over \(F\).
\end{definition}

\begin{example}[\coqident{Examples}{disp_cartesian}] \label{ex:const-displayed}
  Given any categories \(\C\), \(\C'\), the \emph{constant displayed category over \(\C\) with fibre \(\C'\)}, denoted \(\dconst[\C]{\C'}\) (or just \(\dconst{\C'}\), when \(\C\) is implicit), is the pullback along the unique functor \(\C \to 1\) of \(\C'\), seen as a displayed category over \(1\).

  There is an evident equivalence from the total category \(\total[\C]{\dconst{\C'}}\) to the product \(\C \times \C'\), strictly over \(\C\).
\end{example}

\begin{definition}[\coqident{Constructions}{sigma_disp_cat}]
  Let \(\D\) be a displayed category over \(\C\), and \(\E\) a displayed category over \(\total[\C]{\D}\).
  The \emph{\(\Sigma\)-category of \(\E\) over \(\D\)}, denoted \(\Sigdisp{\D}{\E}\), is the displayed category over \(\C\) defined as follows:
  \begin{enumerate}
  \item \(\dob{\left(\Sigdisp{\D}{\E}\right)}{x} \defeq \sum_{y:\dob{\D}{x}} \dob{\E}{(x,y)}\)
  \item \(\dmor{(y,z)}{f}{(y',z')} \defeq \sum_{\f:\dto{y}{f}{y'}} \dmor{z}{(f,\f)}{z'}\)
  \item operations defined componentwise from those of \(\D\) and \(\E\).
  \end{enumerate}
  There is an evident equivalence of total categories \(\total[{\left(\total[\C]{\D}\right)}]{\E} \to \total[\C]{\left(\Sigdisp{\D}{\E}\right)}\) over \(\C\).
\end{definition}

\begin{example}[\coqident{Examples}{disp_arrow}, \coqident{Examples}{disp_domain}, \coqident{Codomain}{disp_codomain}] \label{ex:arr-displayed}
  The arrow category has three different displayed incarnations:
  \begin{enumerate}
  \item By \(\darr{\C}\), we mean the displayed category over \(\C \times \C\) with
    \begin{enumerate}
    \item \(\dob{\darr{\C}}{x,y} \defeq \mor[\C]{x}{y}\)
    \item \(\dmor[\darr{\C}]{f}{h,k}{g} \defeq (\compose{f}{k} = \compose{h}{g})\), i.e.\ the proposition that the resulting square commutes.
    \end{enumerate}
    As our notation suggests, the total category of this is the usual arrow category of \(\C\).

  \item Pulling this back along the canonical equivalence \(\total[\C]{\dconst{\C}} \to \C \times \C\), and taking the \(\Sigma\)-category of the result, we obtain a displayed category over \(\C\) which we denote \(\coslice{\C}\), since its fibre categories are just the \emph{co-slices} of \(\C\).
    Its total category is equivalent over \(\C\) to \(\dom : \arr{\C} \to \C\).

  \item If in the previous example, we instead pull back along the equivalence \(\total[\C]{\dconst{\C}} \to \C \times \C\) that swaps the two components, we get instead the displayed category of \emph{slices} of \(\C\), with total category equivalent over \(\C\) to \(\cod : \arr{\C} \to \C\).
        \label{item:slice}
  \end{enumerate}
\end{example}

\begin{example}[\coqident{Examples}{disp_cat_functor_alg}] \label{ex:endofunctor-alg-displayed}
  Suppose \(F : \C \to \C\) is an endofunctor.
  Then \emph{\(F\)-algebras} naturally form a displayed category \(\dAlg[\C]{F}\) over \(\C\), with
  \begin{enumerate}
  \item \(\dob{\dAlg{F}}{c} \defeq \mor[\C]{Fc}{c}\)
  \item \(\dmor{\alpha}{f}{\beta} \defeq (\compose{\alpha}{f} = \compose{Ff}{\beta})\), i.e.\ the proposition that \(f : a \to b\) is an algebra homomorphism \((a,\alpha) \to (b,\beta)\).
  \end{enumerate}
  The total category is the usual category \(\Alg[\C]{F}\).
  We will sometimes write \(\EndAlg[\C]{F}\) to distinguish this from categories of \emph{monad} algebras.
\end{example}

\begin{example}[\coqident{Examples}{disp_cat_monad_alg}] \label{ex:monad-alg-displayed}
  Suppose \((T,\mu,\eta)\) is a monad on \(\C\).
  The full subcategory of \(\EndAlg[\C]{T}\) consisting of the \emph{monad} algebras for \((T,\mu,\eta)\) can be seen as a displayed category over \(\EndAlg[\C]{T}\), as in Example~\ref{ex:full-subcat-displayed}.
  Taking the \(\Sigma\)-category of this yields the monad-algebras \(\MonadAlg[\C]{(T,\mu,\eta)}\) as a displayed category over \(\C\).
  
  As usual, we write just \(\Alg[\C]{T}\) when there is no risk of confusion.
\end{example}

\section{Creation of limits} \label{sec:limits}

\emph{Creation of limits} is our first example of a concept which can be profitably reformulated in terms of displayed categories.

As a property of functors, it is a standard and fruitful tool in category theory.
It has however often been viewed with some mistrust for involving equalities of objects: see, for example, \cite[Remark 5.3.7]{leinster}.

If formulated instead as a property of displayed categories, it involves no equalities of objects:

\begin{definition}[\coqident{Limits}{creates_limit}]
  Let \(\D\) be a displayed category over \(\C\), \(J\) a graph, and \(F\) a diagram of shape \(J\) in \(\total{\D}\).
  Given a limiting cone \(\lambda\) for the diagram \(\compose{F}{\forget} : J \to \C\) in \(\C\), with vertex \(c : \C\), we say that \(\D\) \emph{creates a limit for \(F\) over \(\lambda\)} if
  \begin{enumerate}
  \item there is a unique cone on \(F\) over \(\lambda\); that is, a unique object \(d : \dob{\D}{c}\) and family of arrows \(\mu_j : \dmor{d}{\lambda_j}{\pi_2 F(j)}\) such that the pairs \((\lambda_j,\mu_j)\) form a cone on \(F\) in \(\total{\D}\);
  \item and, furthermore, this unique cone \((\lambda_j,\mu_j)_{j:J}\) is limiting.
  \end{enumerate}
  More generally, we say that \(\D\) \emph{creates limits of shape \(\J\)} (or \emph{creates small limits}, etc.)\ if, for any diagram \(F\) as above over \(\J\) (resp.\ over any small \(\J\)), and every limiting cone \(\lambda\) on \(\compose{F}{\forget}\) in \(\C\), \(\D\) creates a limit for \(F\) over \(\lambda\).
\end{definition}

It is routine to check that this does indeed correspond to the standard notion:
\begin{proposition}
  A displayed category \(\D\) over a category \(\C\) creates a limit or class of limits, in our sense, if and only if the functor \(\forget[\D] : \total{\D} \to \C\) does so in the classical sense. \qed
\end{proposition}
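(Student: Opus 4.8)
The plan is to check that the two definitions of creation differ only in how they bookkeep the fibre over the given vertex, and that this difference is a contractible (classically trivial) choice. Fix a diagram $F : J \to \total{\D}$ and a limiting cone $\lambda$, with vertex $c : \C$, on $\compose{F}{\forget}$. In the classical formulation, $\forget[\D]$ creates a limit for $F$ over $\lambda$ iff there is a \emph{unique} cone $(e,\nu)$ on $F$ in $\total{\D}$ whose image under $\forget[\D]$ is strictly $(c,\lambda)$---that is, $\forget[\D]\,e = c$ and $\forget[\D]\,\nu_j = \lambda_j$ for every $j$---and this unique cone is limiting. The displayed formulation instead asks for a unique pair consisting of a fibre object $d : \dob{\D}{c}$ and a family $\mu_j : \dmor{d}{\lambda_j}{\pi_2 F(j)}$ with $(\lambda_j,\mu_j)$ a cone, this cone being limiting. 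Recall that, by the very definition of $\total{\D}$, such a displayed cone over $\lambda$ is literally a cone in $\total{\D}$ with apex $(c,d)$ whose $\forget[\D]$-image is $\lambda$ on the nose.

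First I would isolate the one lemma doing all the work: for the fixed $\lambda$, the type of classical cones strictly over $\lambda$ is equivalent to the type of displayed cones $(d,\mu)$ over $\lambda$. Unfolding the total category, a classical datum consists of $a : \C$, $x : \dob{\D}{a}$, a path $p : a = c$, a cone $(\lambda',\nu)$ on $F$ with apex $(a,x)$, and witnesses $\lambda'_j = \lambda_j$. Gathering these into a single $\Sigma$-type and contracting the based-path singletons $\sum_{a:\C}(a=c)$ and $\sum_{\lambda'}(\lambda'=\lambda)$---equivalently, transporting $x$ and $\nu$ along $p$ and along the $\lambda'_j = \lambda_j$---leaves exactly a $d : \dob{\D}{c}$ together with a displayed cone $\mu$ over $\lambda$. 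This is the standard ``contract the singleton / transport'' manipulation; under the set-theoretic reading the path $p$ is an on-the-nose equality and the transports are vacuous, so the equivalence is a literal identification.

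Next I would record that this equivalence is cone-preserving: it identifies each classical cone with the total cone $(\lambda_j,\mu_j)$ of apex $(c,d)$ in $\total{\D}$. Two consequences fall out. The existence-and-uniqueness clause transfers because it asserts contractibility of the respective type, and contractibility is preserved by equivalence; so ``there is a unique cone strictly over $\lambda$'' is equivalent to ``there is a unique displayed cone over $\lambda$''. The \emph{limiting} clause transfers verbatim, since the two descriptions name the very same cone in $\total{\D}$, so one is limiting iff the other is. Hence $\D$ creates a limit for $F$ over $\lambda$ in our sense iff $\forget[\D]$ does classically. The statements for a class of limits---limits of a fixed shape, small limits, and so on---then follow immediately, each being the universal quantification of the single-limit statement over the relevant $F$ and $\lambda$.

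The main obstacle is purely the identity-type bookkeeping inside the key lemma: one must check that transport along $p : a = c$ (and along the $\lambda'_j = \lambda_j$) carries the displayed composition and naturality data of a cone to the corresponding data over $c$, so that the singleton contraction is genuinely an equivalence of \emph{cone}-types rather than merely of the underlying families of objects and morphisms. This is routine but fiddly in the univalent reading, and evaporates entirely under the set interpretation---which is exactly the phenomenon the proposition is meant to expose.
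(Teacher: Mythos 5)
Your proof is correct, and it is essentially the argument the paper intends: the paper gives no proof at all, dismissing the equivalence as ``routine to check,'' and the routine check is precisely your key lemma---contracting the singletons $\sum_{a:\C}(a=c)$ and the corresponding (dependent) path-types for the cone legs identifies cones strictly over $\lambda$ with displayed cones over $\lambda$, compatibly with the limiting property, after which contractibility and the limit clause transfer along the equivalence. Your closing remark that the transport bookkeeping evaporates under the set interpretation is exactly why the paper felt entitled to omit the proof.
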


It of course follows immediately from this that the displayed definition implies the various standard consequences of creation of limits.
In fact, however, the proofs from the displayed definition are at least as direct as the standard proofs; for instance,
\begin{proposition}[\coqident{Limits}{total_limits}, \coqident{Limits}{pr1_preserves_limit}]
  Suppose the category \(\C\) has limits of shape \(\J\), and the displayed category \(\D\) over \(\C\) creates limits of shape \(\J\).
  Then \(\total{\D}\) has all such limits, and \(\forget[\D] : \total{\D} \to \C\) preserves them. \qed
\end{proposition}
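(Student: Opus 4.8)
The plan is to unwind the definition of creation of limits of shape $\J$ and observe that it delivers both conclusions essentially immediately; the argument is just the displayed reading of the standard proof that a functor creating limits preserves them.

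First I would take an arbitrary diagram $F : \J \to \total{\D}$ and seek a limit for it in $\total{\D}$. Composing with the forgetful functor gives a diagram $\compose{F}{\forget} : \J \to \C$, and since \(\C\) has limits of shape $\J$ by hypothesis, I can choose a limiting cone $\lambda$ for it, with some vertex $c : \C$. Now I apply the creation hypothesis to $F$ together with this $\lambda$: by definition \(\D\) creates a limit for $F$ over $\lambda$, so there is a unique object $d : \dob{\D}{c}$ and family $\mu_j : \dmor{d}{\lambda_j}{\pi_2 F(j)}$ making the pairs $(\lambda_j,\mu_j)$ a cone on $F$ in $\total{\D}$, and moreover this cone is limiting. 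This limiting cone, with vertex $(c,d)$, is precisely a limit for $F$ in $\total{\D}$; since $F$ was arbitrary, $\total{\D}$ has all limits of shape $\J$.

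For preservation, I would observe that, by construction, applying $\forget[\D]$ to the cone $(\lambda_j,\mu_j)$ recovers exactly the cone $\lambda$, which is limiting in \(\C\); so this particular limiting cone of $\total{\D}$ is sent by $\forget[\D]$ to a limiting cone. To conclude that $\forget[\D]$ preserves \emph{all} limits of shape $\J$, I would invoke uniqueness of limits up to isomorphism: any limiting cone for $F$ in $\total{\D}$ is isomorphic to the one constructed above, the functor $\forget[\D]$ carries this isomorphism of cones to an isomorphism of cones in \(\C\), and a cone isomorphic to a limiting cone is itself limiting.

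I expect no genuine obstacle here, the whole proof being a matter of unwinding definitions. The only point requiring a little care is the bookkeeping that a cone on $F$ in the total category decomposes as a cone $\lambda$ on $\compose{F}{\forget}$ in the base together with a displayed family $\mu$ lying over it—so that the displayed creation property matches the classical lifting of cones—but this correspondence is immediate from the definition of $\total{\D}$ and of its morphisms as pairs $(f,\f)$.
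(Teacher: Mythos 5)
Your proof is correct and is exactly the argument the paper has in mind: the paper omits the proof (marking it \qed) precisely because it is the standard routine argument---lift a chosen limiting cone from \(\C\) via creation to get limits in \(\total{\D}\), then deduce preservation from uniqueness of limits up to isomorphism. No gaps; your bookkeeping remark about cones in \(\total{\D}\) decomposing into a base cone plus a displayed family over it is indeed the only point needing care, and it holds by definition of the total category.
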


Moreover, all the main standard examples of functors that create limits can be seen as the forgetful functors associated to displayed categories.

\begin{example}[\coqident{Examples}{creates_limits_functor_alg}]
  For any endofunctor \(F : \C \to \C\), the displayed category of \(F\)-algebras over \(\C\) creates all limits.
  Likewise, for any monad \(T\) on \(\C\), the displayed category of \(T\)-algebras over \(\C\) creates all limits.
\end{example}

\section{Fibrations} \label{sec:fibrations}

We consider, in this section, three important variations of fibrations of categories: Grothen\-dieck fibrations (and their dual, opfibrations); isofibrations; and discrete fibrations.

We depart from some classical literature in defining fibrations by default to be \emph{cloven}---that is, to include an operation providing all lifts required.
(This is not novel: it has been preferred also by other authors, to avoid indiscriminate use of the axiom of choice.) 
We distinguish the case where liftings are merely known to exist as \emph{weak} fibrations.

\subsection{Fibrations and opfibrations}

\begin{definition}[\coqident{Fibrations}{is_cartesian}]
  Let \(\D\) be a displayed category over \(\C\).
  A map \(\f : \dmor{d'}{f}{d}\) of \(\D\) over \(f : c' \to c\) is \emph{cartesian} if
  for each \(g : c'' \to c'\), \(d'' : \dob{\D}{c''}\), and \(\h : \dmor{d''}{\compose{g}{f}}{d}\),
  there is a unique \(\g : \dmor{d''}{g}{d'}\) such that \(\compose{\g}{\f} = \h\).
\end{definition}

\begin{definition}[\coqident{Fibrations}{cartesian_lift}]
  Let \(\D\) be a displayed category over \(\C\).
  A \emph{cartesian lift} of \(f : \C(c',c)\) and \(d : \dob{\D}{c}\) consists of an object \(d' : \dob{\D}{c'}\) and a cartesian map \(\f : \dmor{d'}{f}{d}\).
\end{definition}

\begin{definition}[\coqident{Fibrations}{cleaving}, \coqident{Fibrations}{fibration}, \coqident{Fibrations}{weak_fibration}]
  A \emph{cleaving} for a displayed category \(\D\) over a category \(\C\) is a function giving, for each \(f : c' \to c\) and \(d : \dob{\D}{c}\), a cartesian lift of \(f\) and \(d\).
  A \emph{(cloven) fibration} over \(\C\) is a displayed category equipped with a cleaving.
  A \emph{weak fibration} is a displayed category such that for each such \(f\), \(d\) as above, there exists some cartesian lift.
\end{definition}

All the above have evident duals: opcartesian maps and lifts, and weak/cloven opfibrations.
Again, these all correspond straightforwardly to their classical versions:
\begin{proposition}
  A map in a total category \(\total[\C]{\D}\) is cartesian in our sense (resp.\ opcartesian) exactly if it is cartesian (opcartesian) with respect to \(\forget[\D]\) in the classical sense.
  A displayed category \(\D\) is a cloven (resp.\ weak) fibration in our sense exactly if \(\forget[\D]\) is one in the classical sense (i.e.\ \cite[Def.~5.3.5]{leinster}, read unchanged in the univalent setting). \qed
\end{proposition}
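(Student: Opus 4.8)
The plan is to prove both halves by directly unwinding the two pairs of definitions and observing that they request literally the same data; the proposition deserves its bare \qed precisely because composition and projection in a total category act componentwise, so nothing beyond definition-chasing is involved. Throughout I would use the evident bijection between objects of $\total[\C]{\D}$ and pairs $(c,d)$ with $c:\C$ and $d:\dob{\D}{c}$, under which $\forget[\D]$ is the first projection.

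For the first claim, fix a morphism $(f,\f) : (c',d') \to (c,d)$ of $\total[\C]{\D}$, so that $f : c' \to c$ and $\f : \dto{d'}{f}{d}$, and note $\forget[\D]$ sends it to $f$. First I would set up a bijection between the two flavours of test data. Classically, cartesianness of $(f,\f)$ with respect to $\forget[\D]$ is tested against pairs $(\psi, g)$ consisting of a morphism $\psi : (c'',d'') \to (c,d)$ and a map $g : c'' \to c'$ with $\forget[\D]\,\psi = \compose{g}{f}$. Writing $\psi = (h,\h)$, the side condition forces $h = \compose{g}{f}$, so such a $\psi$ is exactly a displayed morphism $\h : \dmor{d''}{\compose{g}{f}}{d}$; thus the classical test datum $(\psi,g)$ corresponds precisely to the displayed test datum $(g, d'', \h)$ appearing in the definition of cartesian map. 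Under this correspondence the sought classical filler is a unique $\chi = (g,\g)$ over $g$ with $\compose{\chi}{(f,\f)} = \psi$; since composition in $\total[\C]{\D}$ is componentwise, $\compose{\chi}{(f,\f)} = (\compose{g}{f}, \compose{\g}{\f})$, and equality with $(h,\h) = (\compose{g}{f},\h)$ reduces (first components agreeing by the side condition) to $\compose{\g}{\f} = \h$. This is exactly the displayed condition, and the two uniqueness clauses match on the nose. The opcartesian case is the formal dual.

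For the second claim, I would feed the first claim through the object bijection. A classical cleaving supplies, for every object $e$ of $\total[\C]{\D}$ and every $f : c' \to \forget[\D]\,e$, a classical cartesian lift: an object $e'$ and a cartesian $\phi : e' \to e$ with $\forget[\D]\,\phi = f$. Writing $e = (c,d)$, $e' = (c',d')$, and $\phi = (f,\f)$, the first claim gives that $\phi$ is classically cartesian iff $\f$ is cartesian in the displayed sense; hence such lifts correspond exactly to the cartesian lifts $(d',\f)$ demanded by a displayed cleaving, and the two universal quantifications (over $e$ and $f$, resp.\ over $f$ and $d$) range over the same data. So cloven fibrations coincide. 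For weak fibrations, the first claim already yields a logical equivalence of the propositions ``$\phi$ is cartesian'' and ``$\f$ is cartesian'', so ``there (merely) exists a cartesian lift'' transfers in either direction.

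The one place requiring care is the type-theoretic bookkeeping hidden in ``first components agreeing'': matching a classical $\psi$ whose projection equals $\compose{g}{f}$ only up to a chosen path against a displayed $\h$ living over $\compose{g}{f}$ on the nose. Because each hom-type $\C(c'',c)$ is a set, that path is unique and the attendant transports along it are trivial, so the bijection of test data is unproblematic; under the classical reading it disappears entirely. This is why the proof is safely left to the reader.
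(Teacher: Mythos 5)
Your proof is correct, and it is precisely the routine definition-unwinding that the paper leaves to the reader (the proposition carries a bare \qed; the paper explicitly omits such comparisons with classical notions as routine), including the right observation that the only type-theoretic content is transport bookkeeping over hom-sets. One residual informality: in the cleaving comparison you write the classical lift as $e' = (c',d')$ with $\forget[\D](e')$ equal to $c'$ on the nose, silently contracting the object-level equality that the literal classical definition carries; since that is a based-path (singleton) contraction, it is harmless at any h-level --- unlike the hom-level identifications, it does not even need a setness assumption --- so the argument stands as given.
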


As with the standard definition, cartesian lifts are unique up to isomorphism.
Proposition~\ref{prop:unique-lifts} below shows that when \(\D\) is univalent, they are literally unique.

An important example in our applications of interest is the arrow category:

\begin{proposition}[\coqident{Codomain}{cartesian_iff_isPullback}] \label{prop:cartesian-iff-pullback}
  For any category \(\C\), consider the displayed category \(\slice{\C}\) of slices of \(\C\), as in Example~\ref{ex:arr-displayed}.\ref{item:slice} above.
  An arrow \(h : \dto{f}{k}{g}\) in \(\slice{\C}\) is cartesian exactly if its associated commuting square is a pullback.
  The displayed category \(\slice{\C}\) is a weak fibration just if all pullbacks exist in \(\C\), and a (cloven) fibration just if \(\C\) has chosen pullbacks. \qed
\end{proposition}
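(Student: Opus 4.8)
The plan is to unwind the displayed structure of $\slice{\C}$ and then match the universal property defining cartesian morphisms against the universal property of pullbacks; the fibration statements will then fall out immediately.

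First I would spell out what $\slice{\C}$ concretely is, tracing through the chain of constructions in Example~\ref{ex:arr-displayed}.\ref{item:slice}. An object over $c : \C$ is a morphism $f : a \to c$; and given $f : a \to c'$ over $c'$, $g : b \to c$ over $c$, and a base map $k : c' \to c$, a displayed morphism $h : \dto{f}{k}{g}$ is a $\C$-morphism $h : a \to b$ equipped with a proof that the square commutes, i.e.\ $\compose{f}{k} = \compose{h}{g}$. Since this commutativity condition is a proposition, a displayed morphism is determined by its underlying $\C$-arrow, and displayed composition is just composition of underlying arrows in \(\C\).

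For the first claim I would write the two universal properties side by side. By definition $h$ is cartesian iff for every $m : c'' \to c'$, every object $p : e \to c''$ over $c''$, and every displayed map $q : \dto{p}{\compose{m}{k}}{g}$, there is a unique displayed $r : \dto{p}{m}{f}$ with $\compose{r}{h} = q$. On underlying arrows this says: for every such $m, p, q$ (where $q : e \to b$ satisfies $\compose{p}{\compose{m}{k}} = \compose{q}{g}$) there is a unique $r : e \to a$ with $\compose{p}{m} = \compose{r}{f}$ and $\compose{r}{h} = q$. Abbreviating $u := \compose{p}{m} : e \to c'$, the hypothesis becomes $\compose{u}{k} = \compose{q}{g}$ and the conclusion becomes the existence of a unique $r$ with $\compose{r}{f} = u$ and $\compose{r}{h} = q$ — which is exactly the statement that the square for $h$ is a pullback of the cospan $c' \xrightarrow{k} c \xleftarrow{g} b$. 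The observation that makes the two conditions equivalent is that every cone leg $u : e \to c'$ arises as $\compose{p}{m}$ (take $c'' = c'$, $m = 1_{c'}$, $p = u$), so the cartesian quantification over factorizations $(c'', m, p)$ collapses to the pullback quantification over cones $u$; conversely every factorization yields a cone via $u = \compose{p}{m}$. I expect this collapse of quantifiers to be the only real content of the argument.

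Given the first claim, the second is pure bookkeeping. By the definition of cartesian lift, a lift of $k$ and $g$ is precisely an object $f$ over $c'$ together with a cartesian $h : \dto{f}{k}{g}$, which by the first part is exactly a pullback square over the cospan $c' \xrightarrow{k} c \xleftarrow{g} b$. Hence a cleaving is precisely a choice, for every such cospan, of a pullback, i.e.\ a choice of pullbacks in \(\C\); so $\slice{\C}$ is a cloven fibration iff \(\C\) has chosen pullbacks. Likewise $\slice{\C}$ is a weak fibration iff for every cospan a cartesian lift merely exists, i.e.\ iff all pullbacks exist in \(\C\). The main obstacle is purely organisational: carefully unwinding the objects and morphisms of $\slice{\C}$ through the tower of constructions (constant displayed category, pullback along the swap equivalence, and $\Sigma$-category) and keeping the diagrammatic composition order straight. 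Once this is set up, the argument is just the standard proof that the cartesian morphisms for the codomain projection are exactly the pullback squares, transported verbatim into the displayed setting.
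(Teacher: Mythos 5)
Your proposal is correct and takes essentially the same route as the paper: the paper omits this proof as routine precisely because it is the standard argument identifying cartesian morphisms over the codomain projection with pullback squares, which is exactly what you reconstruct in the displayed setting. Your unwinding of $\slice{\C}$ and the quantifier-collapse observation (every cone leg $u : e \to c'$ arises from the trivial factorisation $m = 1_{c'}$, $p = u$, and conversely only the composite $\compose{p}{m}$ matters) are accurate, so nothing is missing.
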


Finally, we transfer the definition of \emph{split} fibrations.
It seems likely to us that---as with the hom-set condition for categories---split fibrations in the type-theoretic setting should include a setness condition in order to be as useful and well-behaved as classically:
\begin{definition}[\coqident{Fibrations}{is_split}]
  Say a fibration \(\D\) over \(\C\) is \emph{split} if:
  \begin{enumerate}
  \item each \(\dob{\D}{c}\) is a set; and
  \item the chosen lifts of identities are identities, and the chosen lift of any composite is the composite of the individual lifts.
  \end{enumerate}
\end{definition}

\subsection{Isofibrations}

\begin{definition}[\coqident{Core}{iso_disp}]
  Let \(\D\) be a displayed category over \(\C\), and \(f : c \iso c'\) an isomorphism in \(\C\).
  
  A map \(\f : \dmor{d}{f}{d'}\) is a \emph{(displayed) isomorphism} if it has a 2-sided inverse,
  i.e.\ some \(\g : \dmor{d'}{\inv{f}}{d}\)
  such that \(\compose{\f}{\g} \depeq 1_d\) and \(\compose{\g}{\f} \depeq 1_{d'}\).
  We write \( \f : \diso{d}{f}{d'}\).
\end{definition}

As with ordinary isomorphisms, the inverse of a displayed isomorphism is unique.

\begin{definition}[\coqident{Fibrations}{weak_iso_fibration}, \coqident{Fibrations}{iso_cleaving}, \coqident{Fibrations}{iso_fibration}]\label{def:isofib}
  Let \(\D\) be a displayed category over \(\C\).
  Say \(\D\) is a \emph{weak isofibration} if for each isomorphism \(i : c' \iso c\) in \(\C\) and \(d : \dob{\D}{c}\),
  there exists some object \(d' : \dob{\D}{c'}\) and isomorphism \(\bar{i} : \diso{d'}{i}{d}\).
  An \emph{iso-cleaving} on \(\D\) is a function giving, for each such \(i, d\), some such \(d', \bar{i}\).
  A \emph{(cloven) isofibration} over \(\C\) is a displayed category equipped with an iso-cleaving.
\end{definition}

\begin{proposition}A displayed category is a weak (resp.\ cloven) isofibration in our sense just if its forgetful functor is one in the classical sense. \qed
\end{proposition}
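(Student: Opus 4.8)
The plan is to reduce the statement to a single lemma characterising the isomorphisms of $\total{\D}$, after which both the weak and the cloven cases fall out by directly unfolding the two definitions. The key observation is that, for the forgetful functor $\forget[\D] : \total{\D} \to \C$, an object lying over $c$ is \emph{literally} a pair $(c,d)$ with $d : \dob{\D}{c}$, and a morphism lying over $f : c' \to c$ is literally a pair $(f,\f)$ with $\f : \dmor{d'}{f}{d}$; so the strict equalities of objects and morphisms demanded by the classical definition are satisfied definitionally, once we present the lifted data in paired form.

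First I would establish the following routine lemma, exactly analogous to the pointwise-isomorphism lemma for the displayed functor category stated above: a morphism $(f,\f) : (c',d') \to (c,d)$ of $\total{\D}$ is an isomorphism if and only if $f$ is an isomorphism in \(\C\) and $\f : \dmor{d'}{f}{d}$ is a displayed isomorphism over $f$. The forward direction takes the inverse $(g,\g)$ and reads off, from the two identity equations in $\total{\D}$---which unfold as $\compose{f}{g} = 1$ and $\compose{g}{f} = 1$ together with the dependent equalities $\compose{\f}{\g} \depeq 1$ and $\compose{\g}{\f} \depeq 1$---that $g = \inv{f}$ and that $\g$ is a displayed inverse of $\f$; the backward direction simply pairs $\inv{f}$ with the displayed inverse of $\f$. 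This is pure bookkeeping with dependent equalities over the corresponding equations in \(\C\), and is where essentially all the (minor) technical content lies.

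Granting the lemma, I would unfold the classical isofibration condition for $\forget[\D]$: given an object of $\total{\D}$ over $c$---necessarily of the form $(c,d)$---and an isomorphism $i : c' \iso c$, it asks for an isomorphism $\hat{\imath}$ of $\total{\D}$ into $(c,d)$ with $\forget[\D](\hat{\imath}) = i$. Since $\forget[\D]$ is the first projection, such an $\hat{\imath}$ is exactly a pair $(i,\f)$ whose domain lies over $\dom(i) = c'$, i.e.\ is of the form $(c',d')$, together with $\f : \dmor{d'}{i}{d}$; and by the lemma (as $i$ is already invertible) $\hat{\imath}$ is an isomorphism precisely when $\f$ is a displayed isomorphism $\diso{d'}{i}{d}$. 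Thus the classical lifting data for $\forget[\D]$ correspond bijectively to the data $(d',\bar{\imath})$ of Definition~\ref{def:isofib}. Taking this correspondence at the level of mere existence gives the weak case, and at the level of a chosen section gives the cloven case (an iso-cleaving being exactly a choice of classical lift for every $i$, $d$). I expect no genuine obstacle beyond the dependent-equality manipulations in the lemma; the conceptual content is simply that the total category tautologically records the base object as a first component, so that the classical on-the-nose equalities become trivially available.
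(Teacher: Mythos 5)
Your proposal is correct and matches the paper's intent: the paper omits this proof entirely as one of the routine comparisons with classical definitions (it is not even formalised), and your argument---characterising isomorphisms of $\total{\D}$ as pairs of an isomorphism in $\C$ with a displayed isomorphism over it, then matching lifting data on both sides---is precisely the standard unfolding being elided. The only point of care is the one you already flag: type-theoretically, ``the domain lies over $c'$'' is an equality of objects along which one must transport, after which the correspondence between classical lifts for $\forget[\D]$ and the data of Definition~\ref{def:isofib} is immediate, in both the weak (mere existence) and cloven (chosen section) cases.
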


\begin{example}[\coqident{Examples}{iso_cleaving_functor_alg}]
  The displayed categories of groups, topological spaces, and similar are all naturally isofibrations over \(\set\), just as classically.
  More generally, so are the displayed categories of algebras for endofunctors and monads.
\end{example}

In fact, in the univalent setting, isofibrations often come for free:

\begin{problem}[\coqident{Fibrations}{iso_cleaving_category}]\label{problem:univalent_isofib}
  Let \(\D\) be a displayed category over a univalent category \(\C\).
  Then \(\D\) is an isofibration.
\end{problem}

\begin{construction}{problem:univalent_isofib}
  Since \(\C\) is univalent, every isomorphism \(i : c' \iso c\) is uniquely of the form \(\idtoiso(e)\).
  To give an iso-cleaving on \(\D\), it therefore suffices to give, for each \(e : c' = c\) and \(d : \dob{D}{c}\), some \(d':\dob{D}{c'}\) and lift \(\bar{i} : \diso{d'}{\idtoiso(e)}{d}\).
  By identity elimination, the case \(e \defeq \idpath[c]\) suffices; in this case, we take \(d' \defeq d\) and \(\bar{i} \defeq 1_d\).
\end{construction}

  Assuming the univalence axiom, the examples above of \(\dGrp\) and \(\dTop\) over \(\set\) therefore come for free.
  However, we note them separately (and prove them directly, in the formalisation), both to show that they do not require univalence, and to have their action explicitly.

\begin{remark}
  As the examples given illustrate, most fibrations and isofibrations encountered in nature are categories/functors that arise as the total category/forgetful functor of a displayed category.
  This, we argue, supports the idea that it is natural to take the displayed-category definitions as basic for developing fibrations and related notions, especially in the type-theoretic setting.

  However, not all examples are of this form.
  For instance, suppose \(F : \C' \to \C\) is a functor of small categories that is a complemented inclusion on objects; then the precomposition functor \(F^* : \psh{C} \to \psh{C'}\) between their presheaf categories is an isofibration.
  However, in the classical setting, \(\psh{C}\) is not literally the total category of any displayed category over \(\psh{C'}\) (though it is of course isomorphic to one).
\end{remark}

\subsection{Discrete fibrations}

\begin{definition}[\coqident{Fibrations}{is_discrete_fibration}]
  Let \(\D\) be a displayed category over \(\C\).
  Say that \(\D\) is a \emph{discrete fibration} if 
  \begin{enumerate}
  \item for each \(c : \C\), the type \(\dob{\D}{c}\) is a set; and
  \item for any \(f : \C(c',c)\) and \(d : \dob{\D}{c}\), there is a unique
    \(d' : \dob{\D}{c'}\) and \(\f : \dmor{d'}{f}{d}\).
  \end{enumerate}
\end{definition}

These lifts are automatically cartesian; so any discrete fibration is canonically a fibration (\coqident{Fibrations}{fibration_from_discrete_fibration}), and is moreover split (\coqident{Fibrations}{is_split_fibration_from_discrete_fibration}).

Thanks to the setness condition, discrete fibrations over a fixed base category \(\C\) and displayed functors between them form a category; and, just as classically, we have:
\begin{problem}[\coqident{Fibrations}{forms_equivalence_disc_fib}] \label{problem:equiv-disc-psh}
 For any category \(\C\), there is a (strong) equivalence of categories between \(\psh{\C}\) and the category of discrete fibrations over \(\C\). \qed
\end{problem}

For a presheaf \(P\) on \(\C\), the classical category of elements of \(P\) is the total category of the displayed discrete fibration given by the above equivalence.

\section{Comprehension categories} \label{sec:comp-cats}

We now turn briefly to \emph{comprehension categories} and \emph{categories with attributes}, just as a glimpse of the applications in semantics of type theory which provided the proximate motivation for the present development.

\begin{definition}[\coqident{ComprehensionC}{comprehension_cat_structure}] \label{def:comprehension_cat}
  A \emph{comprehension category} consists of a category \(\C\), a fibration \(\T\) over \(\C\), and a functor \(\chi : \T \to \slice{\C}\) over \(\C\) (the ‘comprehension’) preserving cartesian arrows.
\[
 \begin{xy}
   \xymatrix{
            \total{\T} \ar[dr]_{\forget} \ar[rr]^{\chi} & & \total[c:\C]{\slice[c]{\C}} \ar[dl]^{\forget} \\
               & \C  &
            }
 \end{xy}
\]
\end{definition}

This is almost identical to \cite[Definition 2.1.1]{lumsdaine-warren:local-universes}, modulo the correspondence between displayed categories/functors and ordinary categories/functors over the base.  As such, it is a direct reformulation of the original definition \cite[Definition 10.4.2]{jacobs-cat-logic}, taking the fibration of types as primary.

\begin{definition}
  A \emph{split type-category} (aka \emph{category with attributes}) consists of 
  a category \(\C\); 
  a presheaf \(\Ty\) on \(\C\); 
  an operation assigning to each \(\Gamma : \C\) and \(A : \Ty[\Gamma]\) an object and map \(\pi_A : \compext{\Gamma}{A} \to \Gamma\); 
  and operations giving, for each \(f : \Gamma' \to \Gamma\) and \(A : \Ty[\Gamma]\), a map \(\compext{f}{A} : \compext{\Gamma'}{f^*A} \to \compext{\Gamma}{A}\) exhibiting \(\pi_{f^*A}\) as a pullback of \(\pi_A\):
  \[
   \begin{xy}
    \xymatrix@C=4pc{
                     \compext{\Gamma'}{f^*A} \ar[d]_{\pi_{f^*A}} \ar[r]^{\compext{f}{A}} \pb &   \compext{\Gamma}{A} \ar[d]^{\pi_A}
                      \\
                      \Gamma' \ar[r]_{f} & \Gamma .
    }
   \end{xy}
  \]
\end{definition}

\begin{problem}[{\cite[Thm.~2.3]{blanco}}]\label{problem:compcat_from_cdm}
  Any category with attributes induces a comprehension category with the same base.
\end{problem}

\begin{construction}{problem:compcat_from_cdm}
  The equivalence of Problem~\ref{problem:equiv-disc-psh} turns \(\Ty\) into a (discrete) fibration.
  The operations \(\compext{\Gamma}{A}\), \(\pi_A\), and \(\compext{f}{A}\) provide the action on objects and arrows of the comprehension functor; while the pullback condition, combined with Proposition~\ref{prop:cartesian-iff-pullback}, ensures that it preserves cartesian maps.
\end{construction}

\section{Univalence and the Structure Identity Principle} \label{sec:univalent}

\subsection{Displayed univalence}

\begin{definition}[\coqident{Core}{idtoiso_disp}]
  Let \(\D\) be a displayed category over \(\C\). 
  Given \(c, c' : \C\), \(e : c = c'\), \(d : \dob{\D}{c}\), \(d' : \dob{\D}{c'}\),
  and \(e' : d \depeq[e] d'\), we write \(\idtoiso(e,e') : \diso{d}{\idtoiso(e)}{d'}\) 
  for the canonical displayed isomorphism obtained by identity elimination on \(e\), \(e'\).
\end{definition}
Note that we overload the notation \idtoiso, using it for both ordinary and displayed categories.

\begin{definition}[\coqident{Core}{is_univalent_disp}]
  Let \(\D\) be a displayed category over \(\C\). 
  Say that \(\D\) is \emph{univalent} if for any \(c, c' : \C\) and \(e : c = c'\) and \(d : \dob{\D}{c}\) and \(d' : \dob{\D}{c'}\),
  the above map \((d \depeq[e] d') \to \dIso{d}{\idtoiso(e)}{d'}\) is an equivalence.
\end{definition}

To verify univalence of a displayed category, it clearly suffices to prove the condition just in the case where \(e\) is reflexivity.
But displayed isomorphisms over identities are just isomorphisms in the fibre categories, so we have:

\begin{proposition}[\coqident{Constructions}{is_univalent_disp_iff_fibers_are_univalent}]
  Let \(\D\) be a displayed category over \(\C\).
  Then \(\D\) is univalent exactly if each of its fibre categories is univalent. \qed
\end{proposition}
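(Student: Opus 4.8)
The plan is to reduce displayed univalence to a statement about a single fibre by identity elimination, and then to recognise the resulting map as exactly the univalence map of that fibre. First I would fix $c, c' : \C$, $e : c = c'$, $d : \dob{\D}{c}$, and $d' : \dob{\D}{c'}$. Since being an equivalence is a proposition, the claim that the displayed univalence map is an equivalence for all $e$ reduces, by path induction on $e$, to the case $c' \defeq c$ and $e \defeq \idpath[c]$. In this case the domain $(d \depeq[\idpath[c]] d')$ is just the ordinary identity type $d = d'$ in $\dob{\D}{c}$, and the codomain becomes $\dIso{d}{\idtoiso(\idpath[c])}{d'} = \dIso{d}{1_c}{d'}$, the type of displayed isomorphisms over the identity $1_c$.

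The key lemma I would then establish is an equivalence $\dIso{d}{1_c}{d'} \simeq \Iso[\fiber{\D}{c}]{d}{d'}$ between displayed isomorphisms over $1_c$ and isomorphisms in the fibre category. Unwinding definitions, a displayed isomorphism over $1_c$ is a map $\f : \dmor{d}{1_c}{d'}$ together with an inverse $\g : \dmor{d'}{\inv{1_c}}{d}$ satisfying the two dependent inverse laws; an isomorphism in $\fiber{\D}{c}$ is a map $\f : \dmor{d}{1_c}{d'}$ with $\g : \dmor{d'}{1_c}{d}$ whose fibre composites are fibre identities. Since $\inv{1_c} = 1_c$, the two carry the same underlying data, and the fibre composite of $\f$ and $\g$ is precisely the displayed composite over $\compose{1_c}{1_c}$ transported along $\compose{1_c}{1_c} = 1_c$. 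I expect \emph{this} identification to be the main obstacle: the only real bookkeeping of the proof is matching the displayed inverse laws (living over $\compose{1_c}{\inv{1_c}}$ and $\compose{\inv{1_c}}{1_c}$) against the fibre inverse laws (living over $1_c$), which requires carefully managing the transports along $\inv{1_c} = 1_c$ and $\compose{1_c}{1_c} = 1_c$ built into the definition of fibre composition.

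Finally I would check that, under this equivalence, the displayed map $\idtoiso(\idpath[c], -) : (d = d') \to \dIso{d}{1_c}{d'}$ corresponds to the fibre map $\idtoiso_{d,d'} : (d = d') \to \Iso[\fiber{\D}{c}]{d}{d'}$. Both are defined by identity elimination sending reflexivity to the identity morphism—the displayed identity $1_d$ on one side, the fibre identity on the other—and these agree under the equivalence, so the two maps agree. With this triangle in place, the displayed univalence map at $\idpath[c]$ is an equivalence if and only if the fibre univalence map is. Quantifying over all $c$ (and recalling that, after path induction, only the reflexivity cases matter), $\D$ is univalent exactly when every fibre category $\fiber{\D}{c}$ is univalent.
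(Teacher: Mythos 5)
Your proposal is correct and follows essentially the same route as the paper: reduce displayed univalence to the reflexivity case by path induction, and then identify displayed isomorphisms over identity morphisms with isomorphisms in the fibre category (the paper states exactly this identification, together with the reduction to $e$ reflexivity, as its justification for the proposition). Your additional detail—tracking the transports along $\compose{1_c}{1_c} = 1_c$ in fibre composition and checking that the two \idtoiso maps agree under the identification—is precisely the bookkeeping the paper leaves implicit.
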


The key practical application of displayed univalence is in proving that complex categories built up using displayed categories are univalent:

\begin{theorem}[\coqident{Core}{is_univalent_total_category}]\label{theorem:total_univalent}
 Let \(\C\) be a univalent category, and let \(\D\) be a univalent displayed category over \(\C\).
 Then the total category \(\total{\D}\) is univalent. \qed
\end{theorem}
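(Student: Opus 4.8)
The plan is to show that the canonical map $\idtoiso$ for $\total{\D}$ is an equivalence by decomposing both its source and target as $\Sigma$-types over the base data, and then reducing to univalence of \(\C\) on the base together with displayed univalence of \(\D\) on the fibres. Fix objects $(a,x)$ and $(b,y)$ of $\total{\D}$. On the source side, since the objects of $\total{\D}$ form the $\Sigma$-type $\sum_{a:\C}\dob{\D}{a}$, the standard characterisation of paths in a $\Sigma$-type gives an equivalence
\[
  \bigl((a,x) = (b,y)\bigr) \;\simeq\; \sum_{e : a = b}\,\bigl(x \depeq[e] y\bigr).
\]

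On the target side, the key lemma is that a morphism $(f,\f) : (a,x) \to (b,y)$ is an isomorphism in $\total{\D}$ exactly if $f$ is an isomorphism in \(\C\) and $\f$ is a displayed isomorphism $\diso{x}{f}{y}$ over it. The forward direction extracts from a total inverse $(g,\g)$ that $g$ inverts $f$ (so $f$ is an iso, and $g = \inv{f}$ by uniqueness of inverses) and that $\g$, sitting over $\inv{f}$, is a displayed inverse of $\f$; the backward direction assembles $(\inv{f},\g)$ into a total inverse, the required equations following directly from the displayed isomorphism conditions. Since being an isomorphism is a proposition on both sides (the hom-types are sets), this logical equivalence of predicates over the common morphism type $\sum_{f:\C(a,b)}\dmor{x}{f}{y}$ upgrades to an equivalence
\[
  \Iso[\total{\D}]{(a,x)}{(b,y)} \;\simeq\; \sum_{f : \Iso[\C]{a}{b}}\,\dIso{x}{f}{y}.
\]

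It then remains to check that the map $\idtoiso$ of $\total{\D}$ is carried, under these two equivalences, to the map of $\Sigma$-types built from $\idtoiso$ of \(\C\) on the base and the displayed $\idtoiso$ of \(\D\) on each fibre; this compatibility reduces, by identity elimination on $e$ and $e'$, to the reflexivity case, where all three $\idtoiso$ operations return identities by definition. Finally I invoke the standard fact that a fibrewise map of $\Sigma$-types is an equivalence whenever the induced map on bases is an equivalence and each fibre map is: univalence of \(\C\) supplies the former via $\idtoiso$, and univalence of \(\D\) supplies the latter, since for each $e : a = b$ the displayed map $(x \depeq[e] y) \to \dIso{x}{\idtoiso(e)}{y}$ is by hypothesis an equivalence.

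The \emph{main obstacle} is the isomorphism-characterisation lemma and its attendant dependent-equality bookkeeping: matching $\g$ over $g$ with a displayed inverse over $\inv{f}$ requires transporting along $g = \inv{f}$, and one must track carefully which base equalities the displayed composition axioms and isomorphism conditions live over. Everything else is either the standard $\Sigma$-type path lemma, the standard closure of equivalences under $\Sigma$, or an identity elimination that collapses to the reflexivity case.
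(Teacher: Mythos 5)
Your proof is correct and follows essentially the same route as the paper's proof, which is omitted from the text and deferred to the Coq formalisation (\coqident{Core}{is_univalent_total_category}): there, too, one decomposes paths between total objects as $\Sigma$-types of base paths and dependent paths, characterises isomorphisms in $\total{\D}$ as pairs of a base isomorphism and a displayed isomorphism over it, checks that $\idtoiso$ respects these decompositions by identity elimination, and concludes by closure of equivalences under $\Sigma$. The isomorphism-characterisation lemma and its transport bookkeeping, which you flag as the main obstacle, is indeed where the real work lies in the formalisation as well.
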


However, displayed univalence is a meaningful notion even when the base is not known to be univalent; one has, for instance:

\begin{proposition}[\coqident{Fibrations}{isaprop_cartesian_lifts}, \coqident{Fibrations}{univalent_fibration_is_cloven}] \label{prop:unique-lifts}
  Let \(\D\) be a \emph{univalent} displayed category over \(\C\).
  For any \(f : c' \to c\) and \(d : \dob{\D}{c}\), if a cartesian lift \((d',\f)\) of \(f\) and \(d\) exists, then it is unique; that is, the type of cartesian lifts is a proposition.
  More generally, if \(\D\) is a weak (iso-)fibration, then it possesses a unique (iso-)cleaving.
\end{proposition}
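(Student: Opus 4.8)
The plan is to reduce everything to a single claim: for fixed $f : c' \to c$ and $d : \dob{\D}{c}$, the type of cartesian lifts
\[ \sum_{d' : \dob{\D}{c'}} \ \sum_{\f : \dmor{d'}{f}{d}} \text{isCartesian}(\f) \]
is a proposition. Once this is known the first assertion (‘if a lift exists it is unique’) is immediate, and the cleaving/iso-cleaving statements follow formally. To prove propositionality I would first note that $\text{isCartesian}(\f)$ is itself a proposition: unfolded, it asserts, for each $g, d'', \h$, the \emph{contractibility} of the type of fillers $\g$ with $\compose{\g}{\f} = \h$, and contractibility is a proposition, preserved by the dependent products over $g$, $d''$, $\h$. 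Hence the third component carries no information, and it suffices to show that any two cartesian lifts $(d_1',\f_1)$ and $(d_2',\f_2)$ have equal underlying pairs in $\sum_{d'} \dmor{d'}{f}{d}$.

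Next I would run the classical ‘cartesian lifts are unique up to unique isomorphism’ argument, carried out fibrewise. Applying the universal property of $\f_1$ to the identity $\idpath[c']$ and to $\h \defeq \f_2$ produces a displayed map $d_2' \to d_1'$ over $\idpath[c']$ commuting with the two cartesian maps; symmetrically from $\f_2$; and the uniqueness clauses force these to be mutually inverse. This exhibits a displayed isomorphism $\phi : \diso{d_1'}{\idpath[c']}{d_2'}$—equivalently, since isomorphisms over the identity are exactly isomorphisms in the fibre, an isomorphism in $\fiber{\D}{c'}$—compatible with $\f_1$ and $\f_2$. Now univalence of $\D$ enters: instantiating the univalence equivalence at $e \defeq \idpath[c']$ (so that $\idtoiso(e) = \idpath[c']$) and inverting it sends $\phi$ to a path $e' : d_1' = d_2'$ with $\idtoiso(\idpath[c'], e') = \phi$. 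It then remains to upgrade $e'$ to an identification of pairs, i.e.\ to produce a dependent path $\f_1 \depeq[e'] \f_2$.

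This last step is where I expect the real work to lie. The dependent path amounts, after the standard lemma relating transport of a displayed hom along $e'$ to composition with $\idtoiso(\idpath[c'], e')$, to exactly the compatibility equation between $\phi$, $\f_1$, and $\f_2$ established above; but lining up the two—keeping track of which side one pre- or post-composes on, and of the definitional behaviour of displayed $\idtoiso$ under identity elimination—is the fiddly transport bookkeeping at the heart of the proof. With it in hand, $(d_1',\f_1) = (d_2',\f_2)$, so the cartesian-lift type is a proposition.

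Finally, for the generalisation, suppose $\D$ is a weak fibration. For each $f, d$ a cartesian lift merely exists; but a proposition that merely holds holds outright (the truncation recursion principle into a proposition), so each such type is inhabited as well as propositional, hence contractible. A cleaving is a dependent function $\prod_{f,d}$ into these types, so the type of cleavings is a product of contractible types, therefore contractible: $\D$ has a unique cleaving. The iso-fibration case is entirely parallel: two iso-lifts $(d_1',\bar i_1)$, $(d_2',\bar i_2)$ over $i : c' \iso c$ yield a fibre isomorphism $\compose{\bar i_1}{\inv{\bar i_2}} : \diso{d_1'}{\idpath[c']}{d_2'}$, and the same univalence-plus-transport argument shows $\sum_{d'} \dIso{d'}{i}{d}$ to be a proposition (using that being a displayed isomorphism is itself a proposition, as its inverse is unique); assembling as before gives the unique iso-cleaving.
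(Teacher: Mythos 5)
Your proposal is correct and follows essentially the same route as the paper's (much terser) proof: the classical uniqueness-up-to-isomorphism argument, promoted to literal uniqueness by univalence of \(\D\), followed by the formal assembly of the unique (iso-)cleaving from mere existence of lifts. The details you spell out---propositionality of cartesianness, the transport bookkeeping relating displayed \idtoiso to the compatibility equation, and truncation recursion into a proposition---are exactly what the paper's four-line sketch leaves implicit.
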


\begin{proof}
  The usual classical argument shows that cartesian lifts are unique up to isomorphism.
  By univalence of \(\D\), it follows that they are literally unique.
  
  It follows that the type of (iso-)cleavings of \(\D\) is a proposition; and that whenever a suitable lift is known to exist, one can be chosen.
  Putting these together, the proposition follows.
\end{proof}

\noindent Similarly, as for ordinary categories, univalence bounds the h-level of the types of objects:

\begin{proposition}[\coqident{Core}{univalent_disp_cat_has_groupoid_obs}]
 Let \(\D\) be a univalent displayed category over \(\C\).
 Then for each \(c:\C\), the type of objects \(\dob{\D}{c}\) is a 1-type. \qed
\end{proposition}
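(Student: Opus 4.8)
The plan is to mirror the standard argument that the objects of an ordinary univalent category form a 1-type; here everything transfers fibrewise. Recall that a type $X$ is a 1-type exactly when each of its identity types $x =_X y$ is a set. So, fixing $c : \C$ together with two objects $d, d' : \dob{\D}{c}$, it suffices to show that the identity type $d = d'$ is a set.

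First I would specialise the univalence hypothesis to the base path $e \defeq \idpath[c]$. Since $\idtoiso$ sends $\idpath[c]$ to the identity isomorphism $1_c$, and a dependent equality $d \depeq[\idpath[c]] d'$ over reflexivity is just an ordinary equality $d = d'$, displayed univalence yields an equivalence $(d = d') \simeq \dIso{d}{1_c}{d'}$, identifying the identity type with the type of displayed isomorphisms over $1_c$---equivalently, with the isomorphisms of $d$ and $d'$ in the fibre category $\fiber{\D}{c}$. As being a set is invariant under equivalence, it now suffices to show that $\dIso{d}{1_c}{d'}$ is a set.

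The crux is the h-level of this type of displayed isomorphisms. A displayed isomorphism consists of a morphism $\f : \dmor{d}{1_c}{d'}$ together with data witnessing its invertibility; and, as observed just after the definition of displayed isomorphisms, such an inverse is unique, so \emph{being} a displayed isomorphism is a proposition. Hence $\dIso{d}{1_c}{d'}$ is the subtype of the displayed hom-type $\dmor{d}{1_c}{d'}$ cut out by this propositional predicate. By the very definition of a displayed category, this hom-type is a set; and a subtype of a set carved out by a proposition is again a set. Therefore $\dIso{d}{1_c}{d'}$ is a set, hence so is $d = d'$; as $d, d'$ were arbitrary, $\dob{\D}{c}$ is a 1-type.

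I expect the only delicate point to be packaging the uniqueness of displayed inverses into the statement that invertibility is propositional, so that the type of isomorphisms is visibly a propositional subtype of the (set-valued) hom-type. Once that is in hand the h-level bookkeeping is entirely routine, and the argument is a direct transcription of the corresponding fact for ordinary univalent categories.
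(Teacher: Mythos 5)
Your proof is correct and is precisely the argument the paper intends (it omits the proof as routine, remarking only that univalence bounds the h-level ``as for ordinary categories''): specialise displayed univalence to the reflexivity case to identify $d = d'$ with $\dIso{d}{1_c}{d'}$, then observe that the latter is a set because being a displayed isomorphism is a proposition (by uniqueness of displayed inverses, using that the equations live in sets) and the displayed hom-type $\dmor{d}{1_c}{d'}$ is a set by definition. This matches the formalised proof step for step, so there is nothing to add.
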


\subsection{Structure Identity Principle} \label{sec:SIP}

Theorem~\ref{theorem:total_univalent} generalizes an early-noted consequence of univalence, the so-called \emph{structure identity principle}, as formulated by Aczel.
We recall here the version from the HoTT book; a slightly different formulation is considered in \cite{Coquand20131105}.

\begin{defiC}[{\cite[Def.~9.8.1]{HoTTbook}}] A \emph{standard notion of structure} on a category \(\C\) consists of:
\begin{enumerate}
 \item for each \(c : \C\), a type \(P(c)\); \label{item:sns-obs}
 \item for each \(c, c' : \C\) and \(\alpha : P(c)\) and \(\beta : P(c')\) and \(f : \C(c,c')\), a proposition \(H_{\alpha,\beta}(f)\);
 \item such that \(H\) is suitably closed under composition and identity; and \label{item:sns-ops}
 \item for each \(c : \C\), the preorder on \(P(c)\) defined by setting \(\alpha \leq \alpha'\) if \(H_{\alpha,\alpha'}(1_c)\) is a poset. \label{item:sns-univalent}
\end{enumerate}
\end{defiC}

Items~\ref{item:sns-obs}--\ref{item:sns-ops} can immediately be read as providing an associated displayed category over \(\C\) (\coqident{SIP}{disp_cat_from_SIP_data}), whose displayed hom-sets are propositions.
The category of \emph{\((P,H)\)-structures}, as defined in \cite{HoTTbook}, is precisely the total category of this displayed category.

With a little thought, item~\ref{item:sns-univalent} can then be seen as saying that this displayed category is univalent (\coqident{SIP}{is_univalent_disp_from_SIP_data}).
Theorem~\ref{theorem:total_univalent} then immediately implies:
\begin{corC}[{\cite[Theorem 9.8.2]{HoTTbook}}]
 Given a standard notion of structure \((P,H)\) on \(\C\), if \(\C\) is univalent, then so is the category of \((P,H)\)-structures on \(\C\). \qed
\end{corC}

\begin{example}[\coqident{Examples}{is_univalent_disp_functor_alg}]
  The displayed categories of algebras for an endofunctor or monad (Examples~\ref{ex:endofunctor-alg-displayed}, \ref{ex:monad-alg-displayed}) arise from standard notions of structure, and so are univalent.
\end{example}

\subsection{Amnestic functors}\label{subsec:amnestic}

Univalence of categories beyond posets is not typically considered explicitly in the classical setting, since when all types are sets, only a category containing no non-trivial automorphisms can be univalent.
However, the functors corresponding to univalent displayed categories can be recognised in the established (though comparatively little-used) notion of \emph{amnestic} functors.
To compare them in the univalent setting, we must clarify the classical vocabulary a little.
By saying that a morphism \(f : a \to b\) in a category \(\C\) \emph{is an identity}, we mean this in the total type of morphisms of \(\C\): that is, that there exists some \(c : \C\) such that \((a,b,f) = (c,c,1_c) : \Sigma_{x,y:\C}\mor[\C]{x}{y}\).

\begin{definition}[{cf.\ \cite[Def.~3.27(4)]{adamek-herrlich-strecker}}]
  A functor \(F : \C' \to \C\) is:
  \begin{enumerate}
  \item \emph{weakly amnestic} if for any isomorphism \(i : a \iso b\) in \(\C'\), \(i\) is an identity if and only if \(Fi\) is an identity;
  \item \emph{amnestic} if for any isomorphism \(i : a \iso b\) in \(\C'\), the map from ‘objects \(c : \C'\) such that \((a,b,i) = (c,c,1_c)\)’ to ‘objects \(c : \C\) such that \((Fa,Fb,Fi) = (c,c,1_c)\)’ is an equivalence.
  \end{enumerate}
\end{definition}

The established definition of amnestic is usually phrased as what we have called weakly amnestic.
However, in the classical setting, they are equivalent; so either may be seen as a reasonable type-theoretic reading of the classical definition:

\begin{proposition}
  If \(\C\) is a category whose type of objects is a set, then for any \(f : a \to b\) in \(\C\), the type of ‘objects \(c\) such that \((a,b,f) = (c,c,1_c)\)’ is a proposition.

  Thus if \(\C\) and \(\C'\) both have sets of objects, a functor \(F : \C' \to \C\) is amnestic if and only if it is weakly amnestic. \qed
\end{proposition}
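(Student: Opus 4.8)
The plan is to prove the two sentences in turn; the second will follow almost formally from the first. First I would establish the displayed claim: that for $f : a \to b$ the type $T_f := \sum_{c:\C}\bigl((a,b,f)=(c,c,1_c)\bigr)$ is a proposition. The key preliminary observation is that the total type of morphisms $S := \sum_{x,y:\C}\mor[\C]{x}{y}$ is a \emph{set} whenever $\C_0$ is a set: it is a $\Sigma$-type over the set $\C_0$ whose fibres $\mor[\C]{x}{y}$ are sets by the definition of a category. Since $S$ is a set, each identity type $(a,b,f)=(c,c,1_c)$ in $S$ is a proposition, so $T_f$ is a sum of propositions indexed by $\C_0$. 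To conclude that $T_f$ is itself a proposition, I would take two elements $(c_1,p_1),(c_2,p_2)$, form the path $\inv{p_1}\cdot p_2 : (c_1,c_1,1_{c_1}) = (c_2,c_2,1_{c_2})$ in $S$, and apply the first projection $S \to \C_0$ to extract a path $c_1 = c_2$. Since the fibres of $T_f$ are propositions, this path lifts to an identification of the two elements, so $T_f$ is a proposition.

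For the second sentence, I would fix an isomorphism $i : a \iso b$ in $\C'$ and invoke the first part for both $\C'$ and $\C$, so that $T_i := \sum_{c:\C'}\bigl((a,b,i)=(c,c,1_c)\bigr)$ and $T_{Fi} := \sum_{c:\C}\bigl((Fa,Fb,Fi)=(c,c,1_c)\bigr)$ are both propositions. Consequently the predicates `$i$ is an identity' and `$Fi$ is an identity'---defined as the mere-existence statements, i.e.\ the propositional truncations of $T_i$ and $T_{Fi}$---coincide with $T_i$ and $T_{Fi}$ themselves. I would then note that the canonical map $T_i \to T_{Fi}$ appearing in the definition of amnesticity is always available for any functor: given a witnessing path for $i$, applying $F$ and using $F1_c = 1_{Fc}$ yields one for $Fi$. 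In particular this canonical map is precisely the forward implication of weak amnesticity, which therefore holds automatically.

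It remains to observe that, between propositions, being an equivalence is the same as being a logical equivalence. Thus $F$ is amnestic at $i$---that is, $T_i \to T_{Fi}$ is an equivalence---exactly when there is also a map $T_{Fi} \to T_i$, i.e.\ exactly when `$Fi$ an identity' implies `$i$ an identity'. Together with the always-available forward direction, this says precisely that $i$ is an identity if and only if $Fi$ is, which is weak amnesticity at $i$; quantifying over all isomorphisms $i$ gives the claimed equivalence. The main obstacle is really only the first part, namely recognising that $S$ being a set forces $T_f$ to be a proposition (via injectivity of $c \mapsto (c,c,1_c)$ along the first projection). Once that is in hand, collapsing the truncation in `is an identity' and reducing equivalence of propositions to logical equivalence are both purely formal.
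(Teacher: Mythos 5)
Your proof is correct and matches the intended argument: the paper marks this proposition as routine (it is stated with a \qed and no written proof), and your reasoning is exactly the standard one it leaves implicit. Both key points are sound---when the objects form a set, the total type of morphisms is a set and the map $c \mapsto (c,c,1_c)$ has propositional fibres (via the first projection, as you note), and since both fibre types are then propositions, the equivalence condition in amnesticity reduces to a two-way implication, whose forward direction is supplied by functoriality, leaving exactly weak amnesticity.
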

%
%
%

We then have:

\begin{proposition}
  Let \(\C\) be any category, and \(\D\) a displayed category over \(\C\).
  Then \(\D\) is univalent exactly if \(\forget[\D] : \total{\D} \to \C\) is amnestic.
\end{proposition}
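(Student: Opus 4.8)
The plan is to reduce both properties to a single condition on a naturality square of $\idtoiso$ maps, and then to read them off from the standard fibre-wise characterization of homotopy pullbacks. The first step is a reformulation of the ``is an identity'' predicate: for \emph{any} category $\E$ and isomorphism $i : a \iso b$, the type of objects $c$ with $(a,b,i) = (c,c,1_c)$ is equivalent to the homotopy fibre $\mathsf{fib}_{\idtoiso_{a,b}}(i)$ of $\idtoiso_{a,b} : (a=b) \to \Iso[\E]{a}{b}$ at $i$. This is a routine contraction: unfolding the path in $\Sigma_{u,v}\mor{u}{v}$ as a triple $(\alpha : a=c,\ \beta : b=c,\ \delta : i \depeq 1_c)$ and contracting the singleton $\Sigma_{c}\Sigma_{\beta : b = c}$ leaves a type equivalent to $\Sigma_{\alpha : a = b}(\idtoiso(\alpha) = i)$, which is exactly the fibre. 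Applying this in both $\total{\D}$ and $\C$, and using that a functor commutes with $\idtoiso$, the amnesticity comparison map for $i$ becomes (up to these equivalences) the map on horizontal fibres induced by the commuting square with top $\idtoiso_{\total{\D}}$, bottom $\idtoiso_{\C}$, left $\mathsf{ap}_{\forget[\D]} : (a = b)\to(\forget[\D]a = \forget[\D]b)$, and right $(\forget[\D])_* : \Iso[{\total{\D}}]{a}{b}\to\Iso[\C]{\forget[\D]a}{\forget[\D]b}$.

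Next I would compute the \emph{vertical} fibres of this same square. Writing $a=(x,p)$ and $b=(y,q)$, the map $\forget[\D]$ on objects is the first projection of $\Sigma_{x:\C}\dob{\D}{x}$, so the fibre of $\mathsf{ap}_{\forget[\D]}$ over $e : x = y$ is the dependent-path type $p \depeq[e] q$. Using that isomorphisms in a total category decompose as a base isomorphism together with a displayed isomorphism over it---so that $\Iso[{\total{\D}}]{a}{b}\simeq\Sigma_{f:\Iso[\C]{x}{y}}\dIso{p}{f}{q}$ and $(\forget[\D])_*$ is the corresponding first projection---the fibre of $(\forget[\D])_*$ over $f$ is $\dIso{p}{f}{q}$. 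A short path induction then identifies the induced map between these vertical fibres, $(p \depeq[e] q) \to \dIso{p}{\idtoiso(e)}{q}$, with the displayed $\idtoiso$; and this map is an equivalence for all $x,y,e,p,q$ precisely when $\D$ is univalent.

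Finally I would glue the two ends together by the standard fact that a commuting square is a homotopy pullback if and only if the maps it induces on the fibres of its two horizontal maps are all equivalences, equivalently if and only if the maps on the fibres of its two vertical maps are all equivalences. By the horizontal clause, $\forget[\D]$ is amnestic (the horizontal-fibre maps are equivalences, for every $a,b$) exactly when each such square is a pullback; by the vertical clause, this holds exactly when the vertical-fibre maps---i.e.\ the displayed $\idtoiso$ maps---are all equivalences, i.e.\ exactly when $\D$ is univalent. The main obstacle I expect is bookkeeping rather than conceptual: carefully checking that the amnesticity map and the displayed-univalence map really are the horizontal- and vertical-fibre comparison maps of \emph{one and the same} square. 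This requires tracking the equivalence of the ``is an identity'' type with the fibre of $\idtoiso$, together with the compatibility of $\idtoiso$ on a total category with its base and displayed parts, so that the identifications of the induced maps are genuinely the ones the pullback lemma consumes.
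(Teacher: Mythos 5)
Your proposal is correct, and it shares its opening move with the paper: both identify the type of witnesses that a morphism ``is an identity'' with the homotopy fibre of $\idtoiso$ (the paper phrases this as the type of equalities $e : a = b$ with $f = \idtoiso(e)$). After that, the routes diverge in packaging rather than in substance. The paper stays concrete: it decomposes the corresponding type for $(f,\f)$ in $\total{\D}$ as a $\Sigma$-type of pairs $(e,\bar{e})$ with $f = \idtoiso(e)$ and $\f \depeq \idtoiso(e,\bar{e})$, observes that the map induced by $\forget[\D]$ is then literally the first projection, and uses that a projection is an equivalence exactly when its fibres are contractible; contracting the singleton quantification over $e$ with $f = \idtoiso(e)$ turns the resulting unique-existence condition directly into displayed univalence. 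You instead organise the four types into the naturality square of $\idtoiso$ along $\forget[\D]$ and appeal to the general fibrewise criterion for homotopy pullbacks (horizontal-fibre comparison maps are all equivalences iff vertical-fibre comparison maps are all equivalences). That criterion is exactly the abstract form of the paper's projection-with-contractible-fibres computation, so the mathematical content coincides; what your version buys is conceptual symmetry---amnesticity and univalence appear as the two fibrewise readings of one and the same square---while the paper's version is more elementary and self-contained, and it sidesteps most of the bookkeeping you rightly flag: checking that the amnesticity map and the displayed $\idtoiso$ really are the two fibre-comparison maps of a single square, which needs both the compatibility of $\idtoiso$ with functors and the decomposition of an isomorphism in $\total{\D}$ into a base isomorphism together with a displayed isomorphism over it.
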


\begin{proof}
  For any map \(f : a \to b\) in \(\C\), the type of ‘objects \(c\) such that \((a,b,f) = (c,c,1_c)\)’ is equivalent to the type of ‘equalities \(e : a = b\) such that \(f = \idtoiso(e)\)’.
  For \((f, \f) : (a,\bar{a}) \to (b,\bar{b})\) in \(\total{\D}\), the analogous type is further equivalent to the type of pairs \(e : a = b\) and \(\bar{e} : \bar{a} \depeq[e] \bar{b}\) such that \(f = \idtoiso(e)\) and \(\f \depeq \idtoiso(e,\bar{e})\).
  
  Moreover, the map between these types induced by \(\forget[\D]\) is the evident projection map, so is an equivalence just if for any \(e : a = b\) such that \(f = \idtoiso(e)\), there is a unique \(\bar{e}\) such that \(\f \depeq \idtoiso(e,\bar{e})\).

  So \(\forget[\D]\) is amnestic just if this holds for every isomorphism in \(\total{\D}\).
  By the quantification over \(e\) such that \(f = \idtoiso(e)\), this is equivalent to the statement: for every \(a\), \(b\), \(e : a = b\), \(\bar{a} : \dob{\D}{a}\), \(\bar{b} : \dob{\D}{b}\), and \(\f : \diso{\bar{a}}{\idtoiso(e)}{\bar{b}}\), there is a unique \(\bar{e}\) such that \(\f \depeq \idtoiso(e,\bar{e})\).
  But this is clearly equivalent to univalence of \(\D\).
\end{proof}

\section{Conclusions} \label{sec:conclusion}

We have introduced displayed categories, and set up their basic theory, along with key examples and applications.

The applications fall into two main groups:
\begin{enumerate}
\item rephrasing classical definitions to avoid referring to equality of objects;
\item allowing categories of multi-component structures, and maps between such categories, to be constructed and reasoned about in a modular, stage-by-stage fashion.
\end{enumerate}

In this paper, we have focused more on the former---for instance, the use of displayed categories as a basis for the development of fibrations in the type-theoretic setting.

We have seen less of the latter, since it is typically tied to specific more involved applications.
However, in our own further work (for instance, on the structures considered in \cite{lmcs:4814}), we have found this at least as significant as a payoff of the present work.

Theorem~\ref{theorem:total_univalent}, giving univalence of the total category, is especially valuable.
Na\"ive approaches to proving univalence quickly become quite cumbersome even for 
categories of only moderately complex structures, such as groups.
The issue is that identities between such structures translate to a tuples of identities between the components, where the identities of later components are usually heterogeneous, involving accumulated transports along the identities between earlier components.

The displayed-category approach avoids this; one need only work `fibrewise', over each component in turn.
All the necessary wrangling of transports is dealt with once and for all in the proof of Theorem~\ref{theorem:total_univalent}.

An instance of this is the proof of univalence of the category of CwF-structures over a fixed univalent base category.
Details are beyond the scope of the present article, but it is available in the formalisation as \coqident{}{is_univalent_term_fun_structure}.
\plan{TODO: actually put that together into the total univalence!}

\paragraph*{Further work}

In the present article and formalisation, we have explored only the basic theory and applications of displayed categories.
There are many clear directions for further work:
\begin{enumerate}
\item In \cite{lmcs:4814}, we have started a project of giving careful comparisons
  between the various categorical structures used for semantics of type theory.
  We touched on this project in Section~\ref{sec:comp-cats}.
  In forthcoming work, we plan to give full comparisons between categories of such structures,
  including comprehension categories, type-categories (not necessarily split), and categories with display maps.

\item The material on creation of limits in Section~\ref{sec:limits} should be generalised to a more permissive notion of \emph{displayed limits}, to cover a broader range of examples.

\item In the formalisation (though not the article) we study displayed adjunctions and equivalences over a fixed base, and show that these induce adjunctions and equivalences between total categories and fibre categories.
  This should be generalised to displayed adjunctions/equivalences over adjunctions/equivalences in the base. 
  
\item Generally, one should be able to assemble displayed categories into a displayed bicategory over the bicategory of categories.
  Of course, this would require defining displayed bicategories, and developing the basic theory of bicategories in the type-theoretic setting.

\item Displayed categories should also be viewable as forming some 2-dimensional analogue of a comprehension category, with displayed categories being the ‘dependent types’ over a base category ‘context’.
  This would provide a new potential guiding example for the ‘directed type theory’ that various authors have started to explore in recent work.
\end{enumerate}

\paragraph*{Acknowledgements.}
We would like to thank Mike Shulman and the participants of the Stockholm Logic Seminar for very helpful feedback on the present work.

\thegrants


\bibliographystyle{amsalphaurlmod}

\bibliography{../literature}

\end{document}